\numberwithin{equation}{section}
\newtheorem{Satz}{Satz}[section]
\newtheorem{prop}[Satz]{Proposition}
\newtheorem{definition}[Satz]{Definition}
\newtheorem{deflem}[Satz]{Definition/Lemma}
\newtheorem{remark}[Satz]{Remark}
\newtheorem{cor}[Satz]{Corollary}
\newtheorem{lemma}[Satz]{Lemma}
\newtheorem{example}[Satz]{Example}
\newtheorem*{theorem}{Theorem}
\newcommand{\Cone}{{\mathrm{Cone}}}
\newcommand{\sing}{\mathrm{sing}}
\newcommand{\ra}{\rightarrow}
\newcommand{\DD}{{\cal{D}}}
\newcommand{\FFF}{{\cal{S}}}
\newcommand{\D}{{\mathbf{D}}}
\newcommand{\FF}{{\cal{F}}}
\newcommand{\ess}{{\mathrm{ess}}}
\newcommand{\RRR}{\mathbb{R}}
\newcommand{\NNN}{\mathbb{N}}
\newcommand{\TTT }{\mathbb{T}}
\newcommand{\CCC }{{\cal{C}}}
\newcommand{\EEE }{{\cal{E}}}
\newcommand{\DDD }{{\bf{\Delta}}}
\newcommand{\Dcal }{{\cal{D}}}
\newcommand{\Rcal }{{\cal{R}}}
\newcommand{\HHH}{{\cal{H}}}
\newcommand{\LLL}{{\mathrm{L}}}
\newcommand{\cone}{{\mathrm{cone}}}
\newcommand{\conf}{{\mathrm{model}}}
\newcommand{\dom}{\mathop{\mathrm{dom}}}
\newcommand{\range}{\mathop{\mathrm{range}}}
\newcommand{\im}{\mathop{\mathrm{im}}}
\newcommand{\Crit}{\mathrm{Crit}}
\newcommand{\spa}{\mathop{\mathrm{span}}}
\newcommand{\spec}{\mathrm{spec}}
\newcommand{\supp}{\mathop{\mathrm{supp}}}
\newcommand{\Lie}{{\cal{L}}}
\newcommand{\PPP}{{\mathbb{P}}}
\newcommand{\ev}{\mathrm{ev}}
\newcommand{\odd}{\mathrm{odd}}
\newcommand{\C}{{\mathbb{C}}}
\def\haken{\mathbin{\vrule height0.6pt width0.6em \vrule height0.6em}}
\newcommand{\norm}[1]{\lVert #1 \rVert}
\newcommand{\abs}[1]{| #1 |}
\begin{document}

\title{The Witten deformation for even dimensional conformally conic manifolds}

\author{Ursula Ludwig}

 \maketitle

\begin{abstract}
The goal of  this article is to generalise the Witten deformation to even dimensional conformally conic  manifolds  $X$ and a  class of functions $f: X \ra \RRR$ called admissible Morse functions. We get  Morse inequalities relating the  $\LLL^2$-Betti numbers of $X$ with the number of critical points of the function $f$. Hereby the contribution of a singular point $p$ of $X$ to the Morse inequalities can be expressed in terms of  the intersection cohomology of the local Morse data of $f$ at $p$. The definition of an admissible Morse function is inspired by stratified Morse theory as developed by Goresky and MacPherson. 
\end{abstract}
MSC-class  35A20 (Primary),  57R70 (Secondary)

\section{Introduction}

The Witten deformation is a method proposed in \cite{witten} by Witten  which, given a smooth Morse function $f:M \ra \RRR$ on a smooth compact  Riemannian manifold $M$, leads to an analytical proof of the Morse inequalities. A  rigorous  account of the analytic proof of the Morse inequalities using semi-classical analysis has been   done in  \cite{hs4}. It was used in \cite{bismutzhang} to give an extension of a theorem of Cheeger and M\"uller on the relation between the Ray-Singer analytic torsion and the Reidemeister torsion.

This article generalises the Witten deformation for conformally conic manifolds $(X,g)$ of even dimension $\dim X = 2\nu$ and a class of functions which are called  admissible Morse functions.   Conformally conic manifolds  generalise Riemannian manifolds with cone-like singularities (see \cite{leschcone}).  The results presented here then also hold in particular for spaces with cone-like singularities. The definition of an admissible Morse function introduced in this article is motivated by the non-degeneracy condition of stratified Morse functions in the sense of stratified Morse theory developed by Goresky and MacPherson \cite{goresky}. However note that the settings in \cite{goresky} and here are slightly different. The spaces considered in \cite{goresky} are Whitney stratified spaces (embedded in some $\RRR^N$) and the non-degeneracy condition for a stratified Morse function is expressed in terms of the generalised tangent spaces for the Whitney stratification. The main reason for us to work on conformally conic manifolds is  that for these spaces  $\LLL^2$-techniques are well understood, and we thus have good tools for an analytic proof of the Morse inequalities. 


In the rest of this introduction we will explain how to adapt Witten's method to the situation described above, state the main results and explain shortly the idea of proof.

The main principle in Morse theory is to give a relation between a "local datum" of the Morse function, namely its critical points, and a "global topological datum" of the space.  For smooth manifolds the latter  is the singular cohomology of the manifold. In the presence of singularities the topological invariant of interest is the so called intersection cohomology.

For conformally conic manifolds intersection cohomology can be analytically expressed by using the complex of $\LLL ^2$-forms: Let us denote by $\Sigma$ the singular set of $X$. Let $(\Omega _0^*(X \setminus \Sigma ), d)$ be the de Rham complex of differential forms with compact supports. For conformally conic manifolds the   elliptic complex $(\Omega _0^*(X \setminus \Sigma ), d)$ admits a unique  extension into a Hilbert complex  $(\CCC, d, \langle \ , \  \rangle ) $  in the Hilbert space of square integrable forms equipped with the $\LLL^2$-metric 
\begin{equation*} \langle  \alpha , \beta \rangle  := \int _{X \setminus \Sigma} \alpha \wedge * \beta . \end{equation*}
 The \emph{$\LLL^2$-cohomology} of $X$, denoted by $ H_{(2)} ^i (X)$,  is defined as   the cohomology of this Hilbert complex.   (See Section 2 for details. Note that  in this article the language of Hilbert complexes, as introduced in \cite{hilbert} is used.)

Witten's idea for an analytic proof of the Morse inequalities on a smooth compact manifold consists in the deformation of the de Rham complex by means of a smooth Morse function (see \cite{witten}, \cite{hs4}). In the presence of singularities we deform  the complex of $\LLL^2$-forms instead. We use  an admissible Morse function $f: X \ra \RRR$ for the deformation (see Definition \ref{defadmissible}). In particular we deform the complex $(\Omega _0^*(X \setminus \Sigma ), d)$ into 
\begin{equation}\label{deformedcomplex} 0 \ra \Omega ^0 _0(X \setminus \Sigma ) \xrightarrow{d_t} \ldots \xrightarrow{d_t } \Omega ^{2 \nu}  _0(X \setminus \Sigma  ) \ra 0 , \end{equation}
 where   $d_t = e^{-ft}d e^{ft}$; here $t \in (0, \infty)$ is the deformation parameter. One can show that  the deformed complex also admits a unique extension into a Hilbert complex, which is denoted by $(\CCC_t, d_t, \langle \ , \  \rangle )$. The map $\omega \ra e^{-tf} \omega$ yields an isomorphism of the two complexes $(\CCC , d , \langle \ , \  \rangle)$ and $( \CCC _t, d_t , \langle \ , \  \rangle)$. Therefore the cohomology of the deformed complex is also isomorphic to the $\LLL^2$-cohomology of $X$, {\it i.e.} $H^i(\CCC_t, d_t, \langle \ , \  \rangle) \simeq H^i _{(2)} (X)$.

Let us denote by $\delta _t$ the adjoint of $d_t$ with respect to the $\LLL^2$-metric. The Witten Laplacian is defined as the  Laplacian associated to the Hilbert complex $(\CCC _t, d_t , \langle \ , \  \rangle)$, {\it  i.e.}
\begin{equation}
\begin{split}
& \Delta _t =   d_t \delta _t + \delta _t d _ t , \\
& \dom (\Delta _t ) = \{\omega  \mid \omega,  d_t \omega , \delta _t \omega , d_t \delta _t \omega, \delta _t d_t \omega \in \LLL^2 (\Lambda ^*(T^*(X \setminus \Sigma)))\}.\end{split}
\end{equation} Note that in the presence of singularities $\Delta _{t} \restriction _{\Omega ^*_0 ( X\setminus \Sigma)}$ is not an essentially self-adjoint operator and therefore we have to specify the domain of the Witten Laplacian carefully. The Witten Laplacian is a non-negative, self-adjoint operator with discrete spectrum. Hodge theory is still valid for the deformed complex, {\it i.e.}
\begin{equation}\label{hodgeeq} \ker (\Delta _{t}^{(i)})  \simeq H^i(\CCC_t,d_t, \langle \ , \  \rangle ), \quad i = 0, \ldots, 2 \nu ,\end{equation}
where $\Delta _t^{(i)}$ denotes the restriction of $\Delta _t$ acting on $i$-forms. 

 The advantage of the deformed complex $(\CCC _t, d_t , \langle \  , \  \rangle )$ compared to the initial complex $( \CCC , d , \langle \  , \  \rangle )$ is that the spectrum of the Witten Laplacian has nice properties for large parameters $t$. For an admissible Morse function $f$ the restriction  $f _{\mid X  \setminus \Sigma}$ is a Morse function in the smooth sense and we denote by $c_i (f_{\mid X \setminus \Sigma})$        the number of critical points of $f_{\mid X \setminus \Sigma}$ of index $i$.  

\begin{theorem}[Spectral gap theorem]\label{thmspectralgap} 
\begin{enumerate}
\item Let $(X,g)$ be an even dimensional conformally conic Riemannian manifold  and let $f:X \ra \RRR$ be an admissible  Morse function. Then there exist constants $C_1, C_2, C_3 >0$ and $t_0>0$ depending on $X$ and $f$ such that for any $t >t_0$,
\begin{equation*} \spec (\Delta _{t}) \cap (C_1e^{-C_2t}, C_3t) = \emptyset.\end{equation*}
\item  Let us denote by $(\FFF_t, d_t, \langle \ , \  \rangle  )$ the subcomplex of $(\CCC _t, d_t,  \langle \ , \  \rangle )$ generated by all eigenforms of the Witten Laplacian $\Delta _t$ to eigenvalues in $[0,1]$.  Then, for $t \geq t_0$,
\begin{equation}\label{mpi1} \begin{array} {ll}  \dim \FFF^{i}_t =  c_i (f_{\mid X \setminus \Sigma}) + \displaystyle \sum _{ p \in \Sigma} m_p^i =:c_i(f),  \end{array} \end{equation} where $m_p^i$ is the contribution of the singular point $p$  to $c_i(f)$ explained in more detail below.
\end{enumerate}
   \end{theorem}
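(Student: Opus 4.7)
The plan is to follow the Helffer-Sjöstrand scheme, using an IMS-type localization to reduce the spectral analysis to three types of \emph{model} problems: the operator far from $\Crit(f) \cup \Sigma$, a harmonic oscillator near each smooth critical point, and a Witten Laplacian on a model cone near each singular point $p \in \Sigma$. First I would choose a partition of unity $\{\chi_j\}$ adapted to a finite open cover consisting of small neighbourhoods of the smooth critical points, small conical neighbourhoods of the singular points, and their complement, and invoke the IMS localization formula
\begin{equation*}
\Delta_t = \sum_j \chi_j \Delta_t \chi_j - \sum_j |\nabla \chi_j|^2 .
\end{equation*}
Since the error term $\sum |\nabla \chi_j|^2$ is uniformly bounded, matching lower bounds on each localized piece will yield a lower bound for $\Delta_t$ modulo a finite-dimensional subspace.

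For the regular pieces the analysis is classical. On the complement of the critical set, the standard Bochner--Weitzenböck identity
\begin{equation*}
\Delta_t = \Delta + t^2 |\nabla f|^2 + t \, \LL_f
\end{equation*}
with $\LL_f$ a zero-order operator built from $\mathrm{Hess}(f)$, combined with $|\nabla f|^2 \geq c > 0$ on this region, produces the lower bound $\Delta_t \geq C_3 t$. Near a smooth critical point, after a Morse chart and conjugation one recognizes the usual harmonic oscillator $-\partial^2 + t^2|x|^2 + t\,A$, whose $\LLL^2$-spectrum on $\RRR^{2\nu}$ is $\{t(2k+\mathrm{const})\}_{k \geq 0}$; this contributes exactly one small eigenvalue in degree equal to the Morse index, accounting for the term $c_i(f_{\mid X\setminus\Sigma})$ in \eqref{mpi1}. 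I would borrow these computations essentially verbatim from the smooth theory.

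The principal obstacle, and the genuinely new ingredient, lies near a singular point $p \in \Sigma$. There I would use the conformally conic structure and the admissibility of $f$ to reduce the local problem to a Witten Laplacian on an exact model cone $C(L) = (0,\varepsilon) \times L$ with the function $f$ replaced by its normal form at $p$. Separation of variables on the cone, combined with the $\LLL^2$-closed extension described in the excerpt, should turn the eigenvalue problem into a family of one-dimensional Schr\"odinger problems on $(0,\varepsilon)$ indexed by the spectrum of the tangential operator on the link; here the admissibility hypothesis is what rules out bad boundary behaviour and what identifies the low-lying part of the spectrum with the intersection cohomology of the local Morse data, yielding the dimensions $m_p^i$. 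The hard step will be to show that this model analysis gives (i) a large spectral gap above $C_3 t$ in the non-harmonic modes and (ii) only exponentially small eigenvalues in the finite-dimensional $m_p^i$-part, so that after gluing via the localization formula the spectrum of $\Delta_t$ indeed avoids the interval $(C_1 e^{-C_2 t}, C_3 t)$.

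Finally, to pass from the localized model estimates to the full statement, I would use a min-max argument in both directions: the model analysis supplies $\sum_i (c_i(f_{\mid X \setminus \Sigma}) + m_p^i)$ approximate eigenforms with eigenvalues in $[0, C_1 e^{-C_2 t}]$ (upper bound on $\dim \FFF_t^i$ summed over $i$); the IMS lower bound combined with Hodge theory \eqref{hodgeeq} and the injectivity of the comparison map on this finite-dimensional subspace gives the matching lower bound, forcing equality degree by degree. The spectral gap of part (1) then follows from the absence of spectrum between the exponentially small cluster produced by tunneling between critical points and the $C_3 t$-lower bound valid on the orthogonal complement.
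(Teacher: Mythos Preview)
Your global strategy --- IMS localization plus min--max, in the Helffer--Sj\"ostrand style --- is a legitimate alternative to the route actually taken in the paper, which instead follows Bismut--Lebeau: one writes the first-order operator $D_t=d_t+\delta_t$ in block form with respect to the splitting $E(t)\oplus E(t)^\perp$, where $E(t)$ is spanned by cut-off model eigenforms, proves that the off-diagonal blocks and the $E(t)$--$E(t)$ block are $O(e^{-ct})$ while the complementary block is bounded below by $C\sqrt t$, and deduces the gap from resolvent estimates on $D_t-\lambda$. Either global machine works once the local model at a singular point is understood, and the paper's version has the virtue of being already adapted to the Hilbert-complex language used throughout.

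The genuine gap is in your treatment of the local model at $p\in\Sigma$. You propose that separation of variables reduces the cone problem to a family of one-dimensional Schr\"odinger operators indexed by the spectrum of a tangential operator on $L$. This fails except in the degenerate case $h=\mathrm{const}$ (i.e.\ $f=\pm r$). For a general admissible Morse function one has $f=rh(\varphi)$ with $h$ nonconstant on the link, and after the Br\"uning--Seeley unitary transformation the model Witten Laplacian takes the form
\[
\TTT_t \;=\; -\partial_r^2 + r^{-2}(S_0^2\pm S_0) + t\,r^{-1}M_h + t^2\bigl(h^2+|\widetilde\nabla h|^2\bigr),
\]
where both $M_h$ and the potential depend on the link variable through $h$; the radial and angular parts are genuinely coupled and there is no tangential operator whose eigenspaces diagonalize the problem. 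The paper handles this instead by (i) a rescaling $\DDD_t=t^2 R_t\DDD_1 R_t^{-1}$, reducing the gap to showing $0\notin\spec_{\ess}(\DDD_1)$; (ii) treating $\DDD_1$ as a perturbation of the undeformed cone Laplacian via the regular-singular perturbation techniques of Br\"uning--Lesch, which gives the result in all degrees $k\neq\nu$; and (iii) a separate ``complex argument'' (if $0$ were in the essential spectrum in middle degree it would have to be an eigenvalue of infinite multiplicity, contradicting the discreteness of a Dirichlet realisation on a truncated cone). The identification $\ker\DDD_t^{(i)}\simeq IH^i(\cone(L),l^-)$ giving the numbers $m_p^i$ is likewise not obtained spectrally but through a Braverman--Silantyev cone-complex quasi-isomorphism. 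Your sketch does not account for any of this, and the step you label ``the hard step'' would not go through as written.
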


The precise definition of the $m_p^i$ is given in Section 3. While it is rather technical  we shortly explain it here also for completeness:  For a point $p \in \Sigma$ and $\epsilon >0$ let us denote by $ B _{\epsilon} (p)$ the closed $\epsilon$-ball around  $p$. Then, for $\epsilon$ small enough, $ B _{\epsilon} (p)$ is homeomorphic to the closed cone $L_p \times [0, 1 ] / {0 } \times L _p$. The closed $2 {\nu} -1$-dimensional manifold $L_p$ is  called the link of the singularity. Let us choose $0 < \delta << \epsilon$. The local Morse data of $f$ at $p \in \Sigma$ reduce in this case to the normal local Morse data and  are defined as the pair of spaces
\begin{equation} (M_p, l_p^-) := \Big ( B_{\epsilon}(p) \cap f^{-1} ([f(p) - \delta, f(p) + \delta ]) , B_{\epsilon}(p) \cap f^{-1} ( f(p) - \delta ) \Big ) .\end{equation} The pair $(M_p, l_p^-) $ is independent of the  choice of $0 < \delta << \epsilon$.   As in stratified Morse theory (see \cite{goresky}, pg. 66) we  call
$ l^- _p:= B_{\epsilon}(p) \cap f^{-1} ( f(p) - \delta ) $ the lower halflink.  It is not difficult to see that the pair  $(M_p, l_p^-)$  is homeomorphic to $(B_{\epsilon} (p)  , l^-_p)$. The contribution of the singular point $p$ to the Morse inequalities  is related to the local Morse data as follows 
\begin{equation}\label{mpidim} m_p^i = \dim IH ^i (M_p, l_p^-) = \dim IH ^i(B_{\epsilon} (p)  , l^-_p),  \end{equation} where $ IH^*$ denotes the intersection cohomology (with closed support).

   As usual  the following Morse inequalities follow from the spectral gap theorem and the Hodge theory  for the deformed complex \eqref{hodgeeq}  by a simple algebraic argument

\begin{cor}\label{cormorse} In the situation of Theorem \ref{thmspectralgap}
\begin{equation}\label{morseinequ}
\begin{split} & \sum _{i = 0} ^ {k} (-1) ^{k-i}c_i (f)  \geq \sum _{i = 0} ^ {k} (-1) ^{k-i} b_i^{(2)} (X), \text{ for all }  0 \leq k <  2 \nu,
\\
&  \sum _{i = 0} ^ {2\nu} (-1) ^{i}c_i (f)  = \sum _{i = 0} ^ {2 \nu} (-1) ^{i} b_i^{(2)} (X), \end{split} \end{equation} where $b_i^{(2)} (X):= \dim H_{(2)} ^i (X)$ denote the $\LLL^2$-Betti numbers of $X$.
\end{cor}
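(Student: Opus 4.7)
The plan is to deduce the Morse inequalities algebraically from the spectral gap, using the finite-dimensional subcomplex $(\FFF_t, d_t)$ from Theorem \ref{thmspectralgap} as a small model for the $\LLL^2$-cohomology. First I would verify that $\FFF_t$ is a genuine subcomplex of $\CCC_t$: since $\Delta_t$ is self-adjoint with discrete spectrum and commutes with $d_t$ and $\delta_t$ on their natural domains, the spectral projector onto the eigenspaces with eigenvalues in $[0,1]$ commutes with $d_t$, and hence $d_t(\FFF_t^i) \subseteq \FFF_t^{i+1}$.

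Next I would identify $\dim H^i(\FFF_t, d_t)$ with $b_i^{(2)}(X)$. Because $0 \in [0,1]$, the kernel $\ker \Delta_t^{(i)}$ is contained in $\FFF_t^i$, and since $d_t, \delta_t$ preserve $\FFF_t$, the Laplacian of the restricted finite-dimensional complex coincides with $\Delta_t|_{\FFF_t}$. Finite-dimensional Hodge theory for $(\FFF_t, d_t)$ therefore yields $H^i(\FFF_t, d_t) \simeq \ker \Delta_t^{(i)}$, which combined with \eqref{hodgeeq} and the isomorphism $H^i(\CCC_t, d_t) \simeq H^i_{(2)}(X)$ gives $\dim H^i(\FFF_t, d_t) = b_i^{(2)}(X)$.

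Finally I would invoke the standard linear-algebraic Morse argument for the finite-dimensional cochain complex $\FFF_t$. Writing $r_i := \mathrm{rank}(d_t \colon \FFF_t^i \to \FFF_t^{i+1})$ with convention $r_{-1} = r_{2\nu} = 0$, rank-nullity gives $\dim \FFF_t^i = b_i^{(2)}(X) + r_i + r_{i-1}$. Substituting $\dim \FFF_t^i = c_i(f)$ from \eqref{mpi1} and taking the alternating sum, the cross-terms telescope to
$$ \sum_{i=0}^k (-1)^{k-i} c_i(f) - \sum_{i=0}^k (-1)^{k-i} b_i^{(2)}(X) = r_k \geq 0, $$
which is the first line of \eqref{morseinequ}; equality at $k = 2\nu$ comes from $r_{2\nu} = 0$. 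The only subtlety here is the first step: confirming that the spectral projectors commute with $d_t$ and $\delta_t$ as unbounded operators, so that $(\FFF_t, d_t)$ genuinely inherits the Hilbert-complex structure. Once this is in place, the remainder is a routine application of the Witten--Helffer--Sj\"ostrand algebraic argument.
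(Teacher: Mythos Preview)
Your proof is correct and follows essentially the same route as the paper: use Theorem~\ref{thmspectralgap}(2) to get $\dim \FFF_t^i = c_i(f)$, identify $H^i(\FFF_t,d_t)\simeq \ker\Delta_t^{(i)}\simeq H^i_{(2)}(X)$ via Hodge theory (Proposition~\ref{propstokes}), and conclude by the standard rank--nullity telescoping argument. The paper in fact proves the corollary jointly with Part~(2) of the theorem and simply invokes ``a standard algebraic argument'' for the last step, whereas you spell out the telescoping explicitly and flag the one genuine analytic point (that the spectral projector commutes with $d_t$, $\delta_t$ so that $\FFF_t$ is a subcomplex); this is indeed a general fact for Fredholm Hilbert complexes and is implicit in the paper's setup.
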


Note that a particular example of the situation treated here is the case where  $X$ is a complex cone and $f: X \ra \RRR$ is a stratified Morse function in the sense of \cite{goresky}. Then  the Morse inequalities in \eqref{morseinequ} give back the stratified Morse inequalities in \cite{goresky}.

   The key step in the proof of the spectral gap theorem is to construct a local model operator $\DDD _{t,p}$ for the Witten Laplacian near a singular point $p$ of $X$ and to show a local spectral gap theorem for  $\DDD _{t,p}$.

In a previous paper \cite{ul} (see also \cite{ulcurve})  the Witten deformation for singular complex curves $C \subset \PPP ^ n( \C )$ equipped with a stratified Morse function in the sense of the theory in \cite{goresky} has been discussed. There the local model has a particularly simple form which has  been treated by an explicit computation. In  the higher dimensional situation the local model is more complicated and the local spectral gap theorem is shown using perturbation techniques for regular singular operators as in \cite{leschcurve}. A cone construction adapted from \cite{braverman} (see also \cite{farber}) is used to prove the relation to the geometry.

These notes are organised as follows: In Section 2 we shortly recall the basic facts on the $\LLL^2$-cohomology of conformally conic manifolds. We also define the class of admissible Morse functions and explain the Witten deformation in this singular context. The results in Section 2 are direct generalisation of the curve case and they also hold mutatis mutandis for the odd dimensional case. We give the proofs in some detail however for convenience of the reader. The main work is  done in Section 3 where we define the local model operator $\DDD _t$ for the Witten Laplacian near a singular point of $X$ and show the local spectral gap theorem.  In Section 4 an outline of the proof of the spectral gap theorem (Theorem \ref{thmspectralgap}) and the Morse inequalities (Corollary \ref{cormorse}) is given. Once the local situation near singular points of $X$ is understood the proof of the spectral gap theorem is a direct generalisation of the proof in the smooth case (here we follow the proof in \cite{bismutlebeau}, Section 9). 

The results presented here have been announced in \cite{ulcras3}.

\section{The Witten deformation for conformally conic manifolds  and admissible functions}\label{sectionhodget}

Let $X$ be a topological space, which moreover is a smooth manifold of dimension $\dim X = 2 \nu$ outside a set of isolated singularities $\Sigma := \{p _1, \ldots, p_N\}$. Let $g $ be a Riemannian metric on $X \setminus \Sigma$. We call the pair  $(X,g)$ a conformally conic manifold (see \cite{leschcone}) if
\begin{itemize} 
\item For each $p \in \Sigma$ there exists an open neighbourhood $U_p$ in $X$ such that $ ( U_p \setminus \{p\}, g _{\mid U_p \setminus \{p\}})  $  is isometric to $((0, \epsilon) \times L _ p , g(r))$. Hereby $L_p$ is a closed manifold of  dimension $\dim (L_p) = 2\nu-1=:n$ called the link of the singular point $p$. Moreover $g(r) = h(r) ^2 (dr^2 \oplus r^2 g_{L _p }(r))$, where $g_{L_p}(r)$ is a family of metrics on $L_p$, smooth in $(0, \epsilon)$ and continuous in $[0, \epsilon) $, and $h \in C^{\infty}((0, \epsilon) \times L_p)$ satisfies \begin{equation} \sup _{\varphi  \in L_p} | (r \partial _r )^j (r^{-c} h(r,\varphi ) -1) | = O (r^{\delta}) \text{ as } r \ra 0, \quad j=0,1 , \end{equation}
and    \begin{equation}\sup _{\varphi  \in L_p} \norm{ h(r,\varphi )^{-1} d_{L _p} h(r,\varphi )} _{T_{\varphi }^*L, g_{L_p}(r)} = O (r^{\delta} ) \text{ as } r \ra 0 , \end{equation} 
for some $\delta >0$ and $c>-1$.

\item If we denote by \begin{equation*} g^0:= dr^2 + r^2 g_{L_p}(0), \quad g^1:= h^{-2} g=  dr^2 + r^2 g_{L_p}(r)\end{equation*} and by $\omega^0$, $\omega ^1$ the connection forms of  the Levi-Civita connections for $g^0$, $g^1$, then \begin{equation}\sup _{ \varphi \in L_p} ( |g^1-g^0|_{(r,\varphi )}^0+ r |\omega ^1 - \omega ^0 |_{(r,\varphi )}^0) = O (r^{\delta} ) \text{ as } r \ra 0 , \end{equation}
where $^0$ refers to the metric $g^0$.
\end{itemize}

 We will  also assume for the rest of this paper that $X$ is compact.  Via the transformation $r \mapsto \frac{1}{1+c} r^{c+1}$ the metric $g$ changes into $\tilde g = \tilde h^2 (dr^2 +r^2 \tilde g_{L_p}(r))$ which satisfies all estimates above with $c=0$. From now on we will always assume that we are in this later situation.

Conformally conic Riemannian manifolds generalise Riemannian manifolds with cone-like singularities. In particular a conformally conic manifold  is quasi-isometric to a conic one.  Important examples of conformally conic manifolds are singular complex curves and   complex cones (see \cite{leschcone}).

\

Let $(X,g)$ be a conformally  conic Riemannian manifold. Let $(\Omega _0^*(X \setminus  \Sigma ), d)$ be the de Rham complex of differential forms with compact supports. An {\it ideal boundary condition} (see \cite{cheeger2}, \cite{hilbert}) for the elliptic complex $(\Omega _0^*(X \setminus \Sigma ), d)$ is a choice of closed extensions $D_k $ of $d_k$ in the Hilbert space of square integrable $k$-forms, such that $D_k (\dom (D_k)) \subset \dom (D_{k+1}). $  We then get a Hilbert complex 
\begin{equation} 0 \ra \dom (D_0 ) \xrightarrow{D_0}  \ldots \ldots \xrightarrow{D_{n-1}} \dom (D_n) \ra 0  .\end{equation}
We will abbreviate the term {\it ideal boundary condition} by {\it ibc} in the sequel. The minimal and maximal extension of $d$ 
\begin{equation} d_{\min} := \overline{d} = \text{ closure of }d, \  d_{\max }:= \delta ^*= \text{ adjoint of the formal adjoint } \delta \text{ of } d \end{equation}
are examples of {\it ibc's}. A priori there may be several distinct {\it ibc's}.

 As shown in \cite{cheeger2} in the case of manifolds with cone-like  singularities  we have {\it uniqueness of ideal boundary condition}, {\it i.e.} in that case the minimal and the maximal extension coincide. We also say that the $\LLL^2$-Stokes theorem holds for cone-like singularities. The domains of $d_{\min} $ and $d_{\max}$, and therefore the validity of the $\LLL^2$-Stokes theorem, are quasi-isometry invariants. Therefore also in the case of conformally conic manifold we have a {\it unique ibc}, {\it i.e.} \begin{equation}\label{dmindmax}
d_{k, \min  } = d _{k, \max} \text{ for all } k .
\end{equation} We denote  by $(\CCC, d, \langle \ , \  \rangle)$ the unique extension of the differential complex $(\Omega _0^*(X \setminus \Sigma ), d)$ to a Hilbert complex. The cohomology of this complex is the so called $\LLL^2$-cohomology of $X$: \begin{equation} H_{(2)} ^i (X) := \ker d_{i, \min } / \im d_{i-1, \min }= \ker d_{i, \max } / \im d_{i-1, \max }.\end{equation}

Since the $\LLL^2$-cohomology of $X$ is a quasi-isometry invariant one can compute it also by replacing the conformally conic metric with a conic metric. Therefore it is clear that all $\LLL^2$-cohomology groups $H_{(2)} ^i (X)$ are finite dimensional and the complex $(\CCC, d, \langle \ , \  \rangle)$ is Fredholm. Applying Corollary 2.5 in \cite{hilbert} to the Fredholm complex $(\CCC, d, \langle \ , \  \rangle )$ one deduces that the canonical maps \begin{equation} \ker (\Delta ^{(i)}) \longrightarrow  H_ {(2)} ^i(X), \end{equation}

\noindent are isomorphisms, where by $\Delta$ we denote the Laplacian associated to the Hilbert complex $(\CCC, d, \langle \ , \  \rangle)$ and by $\Delta ^{(i)}$ its restriction to $i$-forms.

The {\it uniqueness of ibc} in the case of conformally conic manifolds has  also been shown  by Br\"uning and Lesch in \cite{leschcone} by a different argument, which will be useful here.

In this section we perform the Witten deformation on the complex of $\LLL^2$-forms  by means of a certain class of functions, which we call admissible functions:

\begin{definition}\label{defadmissible} {\rm
\begin{itemize} \item[a)] A continuous function $f:X \ra \RRR$ is called admissible if its restriction to $X \setminus  \Sigma$ is smooth and moreover locally near a singularity $p \in \Sigma$ the function $f$ has the form \begin{equation}f(r, \varphi) = f(p) +  f_1 (r, \varphi) + f_2(r, \varphi) , \text{ where } f_1 = r h, \  f_2 = O(r^{1+ \delta})   \end{equation}

and $h: L _p \ra \RRR$ is a smooth function. 

\item[b)] An admissible function $f:X \ra \RRR$ is called an admissible Morse function if 
\begin{itemize} \item[(i)] the restriction $f\restriction_{ X \setminus \Sigma} $ is Morse in the smooth sense. \item[(ii)] there exists a neighbourhood $U$ of the singular set $\Sigma$ and a constant $a >0$ such that $\vert \nabla f \vert ^2 \geq a^2$ on $U$. \end{itemize} \end{itemize}}
\end{definition}

\begin{remark} The condition (ii) in the definition of an admissible Morse function is inspired by stratified Morse theory as developed by Goresky and MacPherson \cite{goresky}: A stratified Morse function is not critical in normal directions for any critical point on a lower dimensional stratum. It will enter in our analysis in two different ways which will become more transparent in Section 3:  It allows to show the local spectral gap theorem for the model Witten Laplacian. Moreover we can adapt a method from \cite{braverman}, where the Witten deformation for manifolds with boundary has been  studied, to show the geometric content of $m_p^i$, the contribution of the singular point $p \in \Sigma$ to the Morse inequalities.  \end{remark}

 For the rest of this section let $f: X \ra \RRR$ be an admissible function on the conformally conic manifold $X$.  Let us denote by $(\Omega ^*_0( X \setminus \Sigma)  , d_t, \langle \ , \  \rangle)$  the differential complex of smooth forms with compact supports on $X \setminus \Sigma$, where  $d_ t = e^{-ft}d e^{ft}$ and $\langle \ , \  \rangle$ is the $\LLL^2$-metric, $t \in ( 0, \infty)$ is the deformation parameter. We have two interesting associated Hilbert complexes: the maximal extension  $(\CCC _{t,\max}, d_{t,\max}, \langle \ , \  \rangle) $, defined by 
\begin{equation}d_{t, \max}  =  \text{adjoint of the formal adjoint of } d_t  \text{ w.~r.~t. } \langle \ , \  \rangle, \end{equation}
and the  minimal extension 
$(\CCC _{t,\min}, d_{t,\min}, \langle \ , \  \rangle), $ defined by
\begin{equation} d_{t, \min} =  \text{ closure of } d_t \text{ with respect to } \langle \ , \  \rangle. \end{equation}

\noindent  Denote by $\delta _t$ the formal adjoint of the operator $d_t$ with respect to the metric $ \langle \ , \  \rangle$. And by $\Delta _{t }\restriction _{ \Omega _0^*}= (d_t + \delta _t)^2$ the deformed  Laplacian (acting on smooth compactly supported forms).
 
\begin{lemma}\label{wittenlaplacian}

The following  identities hold for $\omega \in \Omega_0^*(X \setminus  \Sigma)$
\begin{equation}\begin{split}
d_t \omega & =  d \omega + tdf \wedge \omega  ,\\
\delta _t \omega & =   e^{tf} \delta e^{-tf} \omega = \delta \omega +  t \nabla f \haken \omega , \\
\Delta _t \omega& = \Delta \omega  + t (\Lie _{\nabla f}+ \Lie _{\nabla f}^*) \omega + t^2 \vert \nabla f \vert ^2 \omega,\\
\end{split} \end{equation} where we denote by  $\Lie_{\nabla f} = d(\nabla f \haken \ ) + \nabla f\haken d $  the Lie derivative in the direction of the gradient vector field $\nabla f$ and by $ \Lie _{\nabla f}^*$ its adjoint.

\end{lemma}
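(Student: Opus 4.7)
The three identities are all pointwise algebraic/differential identities on $\Omega_0^*(X \setminus \Sigma)$, so the plan is simply to unwind the definitions of $d_t$ and its formal adjoint and apply standard super-commutator identities from Riemannian geometry. None of the subtleties involving ideal boundary conditions enter at this formal level.

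First I would verify the formula for $d_t$ by direct computation: writing $d_t \omega = e^{-ft}\, d(e^{ft}\omega)$ and using the Leibniz rule yields $d_t\omega = d\omega + t\,df \wedge \omega$. For $\delta_t$, I would observe that the formal adjoint of the multiplication operator $e^{\pm ft}$ is itself, so $(e^{-ft} d\, e^{ft})^* = e^{ft} \delta\, e^{-ft}$ on smooth compactly supported forms; then the Leibniz rule (applied through the Hodge star, or equivalently via the identity $\delta(u\,\eta) = u\,\delta\eta - \nabla u \haken \eta$ for a scalar $u$) gives $\delta_t\omega = \delta\omega + t\,\nabla f \haken \omega$. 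The one small point worth flagging is the standard duality $\langle df \wedge \alpha, \beta\rangle_x = \langle \alpha, \nabla f \haken \beta\rangle_x$, which is what makes $\nabla f \haken \cdot$ the pointwise adjoint of $df \wedge \cdot$.

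Next, for the Laplacian, I would use $d_t^2 = 0 = \delta_t^2$ (inherited from $d^2 = 0$ via conjugation by $e^{ft}$) so that $\Delta_t = d_t \delta_t + \delta_t d_t$. Substituting the expressions for $d_t$ and $\delta_t$ produces three groups of terms: the order $t^0$ piece is $\Delta\omega$; the order $t^2$ piece is $t^2\bigl(df \wedge (\nabla f \haken \omega) + \nabla f \haken (df \wedge \omega)\bigr)$, which collapses to $t^2 |\nabla f|^2 \omega$ by the pointwise anti-commutator identity between exterior multiplication and interior multiplication by dual vectors. The order $t^1$ piece splits as
\begin{equation*}
t\bigl(d(\nabla f \haken \omega) + \nabla f \haken d\omega\bigr) + t\bigl(\delta(df \wedge \omega) + df \wedge \delta\omega\bigr).
\end{equation*}
By Cartan's magic formula the first bracket is $\Lie_{\nabla f}\omega$, and taking formal adjoints of Cartan's formula (using $(df\wedge\cdot)^* = \nabla f\haken\cdot$) identifies the second bracket as $\Lie_{\nabla f}^*\omega$. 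Summing yields the claimed expression for $\Delta_t\omega$.

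No step here is an obstacle: the whole lemma is a bookkeeping exercise with the Leibniz rule and the exterior/interior multiplication anticommutation. The only thing to watch is sign conventions when passing between $\delta = \pm *d*$ and the expression involving $\nabla f \haken$, but this is fixed once and for all by the adjoint pairing $\langle df \wedge \alpha, \beta\rangle = \langle \alpha, \nabla f \haken \beta\rangle$.
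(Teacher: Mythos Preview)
Your proof is correct and is exactly the standard direct computation; the paper itself does not give any argument here but simply refers to Proposition~5.4 in \cite{bismutzhang}, where the same identities are established by the same bookkeeping with the Leibniz rule, Cartan's formula, and the pointwise duality $(df\wedge\cdot)^*=\nabla f\haken\cdot$.
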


\begin{proof} See e.g. Prop.~5.4 in \cite{bismutzhang}. \end{proof}
\begin{remark}{\rm Note that the operator $M_f: = \Lie _{\nabla f}+ \Lie _{\nabla f}^*$ is a zeroth order operator.} \end{remark}

Let us  denote by \begin{equation}D_{t}^{\ev} := d_t + \delta _t  : \Omega _0^{\ev} (X \setminus \Sigma ) \longrightarrow  \Omega _0^{\odd} (X \setminus \Sigma)\end{equation} and by \begin{equation}D_{t}^{\odd} := d_t + \delta _t  : \Omega _0^{\odd} (X\setminus \Sigma  )  \longrightarrow  \Omega _0^{\ev} (X \setminus \Sigma  ). \end{equation}
The operator  $ D_t := D_{t}^{\ev}  + D_{t}^{\odd} $  is a closable operator with \begin{equation} \dom (D_{t,  \min }) =  \left \{ \begin{array}{l} \omega \in \LLL^2 \mid \text{ there exists a sequence } \phi _n \in \Omega ^*_0 (X) \text{ s.t. } \phi _n \ra \omega  \\
\text{ and } d _t \phi _n, \delta_t \phi _n  \text{ are Cauchy sequences in } \LLL^2 (\Lambda ^ *(T^*(X \setminus \Sigma))) \end{array} \right \} . \end{equation}

\noindent Thus in particular \begin{equation} \dom (D_{t,  \min }) \subset \dom (d _{t,\min}) \cap \dom (\delta _{t, \min}). \end{equation}

\begin{lemma}\label{vorbereitungstokes}
Let $X$ be a conformally conic Riemannian manifold of even dimension $\dim X = 2 \nu$.
\begin{itemize}
\item[(a)]  Then we have
\begin{equation} \dom (d_{t,k, \max} ) \cap \dom (\delta _{t, k-1 , \max} ) \subset \dom (D_{t, \min }) \end{equation} for all $k$, except possibly $k = \nu$.

\item[(b)] Moreover, for $k \not = \nu$, \begin{equation}
\begin{split}
& d_{t,k, \max}  = d _{t,k, \min }, \\
& \delta _{t,k-1, \max}  = \delta _{t,k-1, \min} ,\\
& \Delta _{t}^{ \FF, (k) }  = d_{t,k-1, \min } \delta _{t,k-1, \min } + \delta _{t,k, \min } d _{t, k, \min } ,\\ 
\end{split}
\end{equation} \noindent where $\Delta _{t}^{\FF}$ denotes the Friedrichs extension of  $\Delta _{t} \restriction_{ \Omega ^*_0 (X \setminus \Sigma)}$.
\end{itemize}
\end{lemma}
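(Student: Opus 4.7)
My plan is to reduce everything to the undeformed case and then combine the uniqueness of ideal boundary condition \eqref{dmindmax} with a cutoff-plus-mollifier approximation. First, since $f$ is admissible and the conformal factor in the metric tends to $1$, the local form $f = f(p) + rh + O(r^{1+\delta})$ forces $|df|_g$ to be uniformly bounded, so exterior multiplication by $df$ and contraction by $\nabla f$ are bounded operators on $\LLL^2$. Via Lemma \ref{wittenlaplacian}, $d_t - d$ and $\delta_t - \delta$ are thus bounded zeroth-order perturbations, which implies that $\dom(d_{t,k,\min/\max}) = \dom(d_{k,\min/\max})$, $\dom(\delta_{t,k,\min/\max}) = \dom(\delta_{k,\min/\max})$, and $\dom(D_{t,\min}) = \dom(D_{\min})$. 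It therefore suffices to prove the lemma with $t = 0$.

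The first two identities in (b) then follow at once: $d_{k,\max} = d_{k,\min}$ is exactly \eqref{dmindmax}, while $\delta_{k-1,\max} = \delta_{k-1,\min}$ follows by conjugation with the Hodge star, which is an $\LLL^2$-isometry intertwining $\delta_{k-1}$ with $\pm d_{2\nu-k}$. Both identities in fact hold in every degree; the restriction $k \neq \nu$ is inherited from the last identity.

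The core of (a) is to approximate any $\omega \in \dom(d_{k,\max}) \cap \dom(\delta_{k-1,\max})$ simultaneously in the graph norms of $d$ and $\delta$ by smooth compactly supported forms. Let $\chi_\epsilon \in C^\infty(X)$ vanish in an $\epsilon$-neighbourhood of $\Sigma$, equal $1$ outside a $2\epsilon$-neighbourhood, and satisfy $|d\chi_\epsilon| \leq C/\epsilon$. Then $\chi_\epsilon \omega \to \omega$ in $\LLL^2$ by dominated convergence, and the two commutator errors
\begin{equation*}
d(\chi_\epsilon \omega) - \chi_\epsilon \, d\omega = d\chi_\epsilon \wedge \omega, \qquad \delta(\chi_\epsilon \omega) - \chi_\epsilon \, \delta\omega = -\nabla\chi_\epsilon \haken \omega
\end{equation*}
are supported in the shell $r \in [\epsilon, 2\epsilon]$ near each singular point. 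Decomposing $\omega = \omega_t + dr \wedge \omega_n$ with $\omega_t$ and $\omega_n$ of link-degrees $k$ and $k-1$ respectively, their squared $\LLL^2$-norms reduce to integrals of the form $\int \chi'(r)^2 r^{n-2k}\|\omega_t(r)\|^2\,dr$ and $\int \chi'(r)^2 r^{n-2k+2}\|\omega_n(r)\|^2\,dr$. Once these vanish as $\epsilon \to 0$, a Friedrichs mollifier applied to $\chi_\epsilon \omega$ on $X \setminus \Sigma$ produces the desired smooth approximating sequence, yielding $\omega \in \dom(D_{\min})$.

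The main obstacle is precisely the vanishing of these two error integrals, and this is where $k \neq \nu$ is essential. Membership of $\omega$ in $\LLL^2$ alone does not provide pointwise-in-$r$ control of $\|\omega_t(r)\|$ and $\|\omega_n(r)\|$; instead one has to exploit $d\omega, \delta\omega \in \LLL^2$ by expanding $\omega$ in eigenforms of the link Laplacian and obtaining a system of Fuchsian ODEs in $r$ for the Mellin/Fourier coefficients. Following the spectral analysis of Br\"uning-Lesch \cite{leschcone}, a Hardy-type inequality then forces each coefficient to decay strictly faster than the critical weight, with the sole exception of an indicial root lying exactly on the critical line, which occurs precisely when $k = \nu$. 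This strict decay makes the two error integrals $o(1)$ as $\epsilon \to 0$. Finally, given part (a) together with the equalities of minimal and maximal extensions of $d_t$ and $\delta_t$ from (b), the Friedrichs extension formula $\Delta_t^{\FF,(k)} = d_{t,k-1,\min}\delta_{t,k-1,\min} + \delta_{t,k,\min}d_{t,k,\min}$ follows from the general characterisation of the Friedrichs extension as the self-adjoint extension with form domain $\dom(d_{t,k,\min}) \cap \dom(\delta_{t,k-1,\min})$, on which $\Delta_t$ is represented by the composition on the right.
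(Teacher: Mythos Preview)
Your reduction to $t=0$ via the boundedness of $df\wedge$ and $\nabla f\haken$ is exactly what the paper does, and once there the paper simply \emph{cites} two results from \cite{leschcone}: Theorem 2.1 for the inclusion in (a), and Lemma 3.3 for all three identities in (b). Your argument is correct but organised differently. For the first two identities in (b) you bypass Lemma 3.3 of \cite{leschcone} by invoking \eqref{dmindmax} directly and conjugating by the Hodge star; this is cleaner and, as you note, actually gives $d_{t,k,\min}=d_{t,k,\max}$ and $\delta_{t,k-1,\min}=\delta_{t,k-1,\max}$ in \emph{every} degree (the paper exploits this later, in the alternative proof of Proposition \ref{propstokes}(a)). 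For (a) you sketch the cutoff--mollifier--indicial-root argument that underlies Theorem 2.1 of \cite{leschcone}; the sketch is accurate, but since the decisive step (``a Hardy-type inequality then forces\dots'') is still deferred to \cite{leschcone}, you have not really gained independence from that reference---you have only explained what the black box contains. Your derivation of the Friedrichs formula from the form-domain characterisation is correct and is essentially the content of Lemma 3.3 in \cite{leschcone}; note that you need part (a) together with the always-valid inclusion $\dom(D_{t,\min})\subset\dom(d_{t,\min})\cap\dom(\delta_{t,\min})$ to identify the Friedrichs form domain with $\dom(d_{t,k,\min})\cap\dom(\delta_{t,k-1,\min})$, and then the associated operator is $(d_{t,k,\min})^*d_{t,k,\min}+(\delta_{t,k-1,\min})^*\delta_{t,k-1,\min}$, which by your min${}={}$max identities equals the displayed expression.
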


\begin{proof} (a) It is easy to see using the local form of an admissible function $f$ near the singularities and the formulas in Section \ref{sectionmodel} that $df \wedge : \LLL^2 \ra \LLL^2$ and $\nabla f \haken : \LLL^2 \ra \LLL^2$ are bounded operators. Therefore we get \begin{equation}\label{domainsdeformed}
\begin{split}
& \dom (d_{t,k, \max} ) = \dom (d_{k, \max} ) ,\\
& \dom (\delta _{t, k-1 , \max} )  = \dom (\delta _{ k-1 , \max} ) ,\\
& \dom (D_{t, \min })  = \dom (D_{ \min }),
\end{split}
\end{equation}
where $D = d+ \delta $.
By Theorem 2.1 in \cite{leschcone} we have, for $k \not =\nu$,
\begin{equation}\label{domainsderham} \dom (d_{k, \max} ) \cap \dom (\delta _{k-1 , \max} ) \subset \dom (D_{ \min }) . \end{equation}
\noindent The claim now follows from \eqref{domainsdeformed} and \eqref{domainsderham}.

(b) By (a) the assumptions of Lemma 3.3 in \cite{leschcone} are satisfied for the complex $(\Omega ^*_0( X \setminus  \Sigma)  , d_t, \langle \ , \  \rangle)$. By applying the cited result we get the claim. \end{proof}

\begin{prop}\label{propstokes}
\begin{itemize}
\item[(a)] The complex $(\Omega _0^*(X \setminus  \Sigma), d_t, \langle \ , \  \rangle)$ satisfies the $\LLL^2$-Stokes theorem, {\rm i.e.} \begin{equation} \dom (d_{t,\max} )= \dom (d _{t,\min}) .\end{equation} We denote by $(\CCC_t, d_t, \langle \ , \  \rangle)$ the unique extension of the  complex $(\Omega _0^*(X \setminus  \Sigma), d_t, \langle \ , \  \rangle)$ into a Hilbert complex.
\item[(b)] The  complex $({\CCC _t}, {d_t}, \langle \ , \  \rangle) $ is a Fredholm complex whose cohomology is isomorphic to $H^i_{(2)}(X)$, the $\LLL^2$-cohomology of $X$.  Moreover, for $ i = 0, \ldots, 2 \nu$,  the natural maps 
\begin{equation} \ker \Delta _{t}^{(i)} =  \ker { d_{t, i}}\cap \ker {\delta_{t,i-1}} \longrightarrow  H^i ({\CCC _t}, {d_t}, \langle \ , \  \rangle) \simeq H^i_{(2)}(X)   \end{equation} are isomorphisms, where $\Delta_t$ is the  Laplacian associated to the complex $(\CCC_t, d_t, \langle \ , \  \rangle)$. 
\item[(c)] The operator $\Delta _t$ is a  discrete operator and moreover, for  $k \not = \nu$,
\begin{equation} \Delta _t^{(k)} = \Delta _{t}^{ \FF , (k)}. \end{equation}

\item[(d)] The Gauss-Bonnet operator associated to the complex $(\CCC_t, d_t, \langle \ , \  \rangle)$, {\it i.e.} \begin{equation}\label{gaussbonnetdef} D_{t}^{GB} := \displaystyle  \bigoplus _{k \geq 0} ( d_{t,2k, \min} + \delta _{t,2k-1, \max}) \end{equation} is Fredholm and satisfies
\begin{equation}\label{gaussbonnet} D_{t}^{GB} :=
 \left \{ \begin{array}{ll} D_{t, \min}^{\ev} & \text{ if } \nu \text{ odd}, \\ D_{t, \max}^{\ev} & \text{ if } \nu \text{ even}.  \end{array} \right .\end{equation}
Moreover $\mathrm{ind} ( D_t ^{GB} ) = \chi _{(2)} (X)$ where $\chi _{(2)} (X)$ is the $\LLL^2$-Euler characteristic of $X$.

\end{itemize}
\end{prop}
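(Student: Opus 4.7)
The plan is to reduce all four parts to statements about the undeformed complex via the multiplication operator $\omega\mapsto e^{tf}\omega$. Since $f$ is continuous on the compact space $X$, this is a bounded operator with bounded inverse $\omega\mapsto e^{-tf}\omega$ on every $\LLL^2$-space of forms, it preserves $\Omega_0^*(X\setminus\Sigma)$, and a direct calculation gives the intertwining identity $d(e^{tf}\omega)=e^{tf}d_t\omega$. Consequently $\omega\mapsto e^{tf}\omega$ carries $\dom(d_{t,\max})$ bijectively onto $\dom(d_{\max})$ (via the intertwining) and also $\dom(d_{t,\min})$ bijectively onto $\dom(d_{\min})$ (because any approximating sequence $\phi_n\in\Omega_0^*$ for $\omega$ in the $d_t$ graph norm is sent to an approximating sequence $e^{tf}\phi_n\in\Omega_0^*$ for $e^{tf}\omega$ in the $d$ graph norm, and conversely). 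Combined with the uniqueness of \emph{ibc} \eqref{dmindmax} in the undeformed case this proves (a), and the same bijection is a chain isomorphism of Hilbert complexes $(\CCC,d,\langle\ ,\ \rangle)\to(\CCC_t,d_t,\langle\ ,\ \rangle)$, so the deformed cohomology equals $H_{(2)}^i(X)$, the deformed complex is Fredholm, and the Hodge isomorphism in (b) is Corollary~2.5 of \cite{hilbert} applied to this Fredholm Hilbert complex.

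For (c), Lemma \ref{wittenlaplacian} writes $\Delta_t-\Delta$ as the sum of a bounded zeroth order operator $tM_f$ and the bounded multiplication operator $t^2\abs{\nabla f}^2$, both genuinely bounded on $\LLL^2$ by the local form of an admissible function near $\Sigma$. Thus $\Delta_t$ shares the discreteness of $\Delta$. For $k\neq\nu$, Lemma \ref{vorbereitungstokes}(b) already identifies $\Delta_t^{\FF,(k)}$ with $d_{t,k-1,\min}\delta_{t,k-1,\min}+\delta_{t,k,\min}d_{t,k,\min}$, and in view of the $\LLL^2$-Stokes theorem from (a) this is precisely the Laplacian $\Delta_t^{(k)}$ of the Hilbert complex on the $k$-th piece.

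Part (d) is a degree-by-degree comparison of the three operators $D_t^{GB}$, $D_{t,\min}^{\ev}$ and $D_{t,\max}^{\ev}$, each of which decomposes into a direct sum indexed by even degrees $2k$. For $2k\neq\nu$, Lemma \ref{vorbereitungstokes} supplies both the equalities $d_{t,2k,\min}=d_{t,2k,\max}$, $\delta_{t,2k-1,\min}=\delta_{t,2k-1,\max}$ and the joint density of $\Omega_0^{2k}$ in $\dom(d_{t,2k,\max})\cap\dom(\delta_{t,2k-1,\max})$ under the joint graph norm, so all three operators coincide on such a piece. When $\nu$ is odd every $2k$ avoids $\nu$ and one obtains $D_t^{GB}=D_{t,\min}^{\ev}$; when $\nu$ is even the only delicate piece is $2k=\nu$, where $D_t^{GB}$ uses $d_{t,\nu,\min}$, which already equals $d_{t,\nu,\max}$ by (a), together with $\delta_{t,\nu-1,\max}$, so the resulting domain is exactly the $\nu$-piece of $\dom(D_{t,\max}^{\ev})$. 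Fredholmness of $D_t^{GB}$ and the identity $\ind(D_t^{GB})=\chi_{(2)}(X)$ follow immediately from the Hodge isomorphism (b), which identifies $\ker D_t^{GB}\simeq\bigoplus_k H_{(2)}^{2k}(X)$ and $\ker (D_t^{GB})^*\simeq\bigoplus_k H_{(2)}^{2k+1}(X)$. The main obstacle is this last paragraph: at the middle degree Lemma \ref{vorbereitungstokes}(a) is silent, so identifying $D_t^{GB}$ forces one to invoke the $\LLL^2$-Stokes theorem from (a) of this proposition precisely at the degree where the Br\"uning--Lesch argument of that lemma does not apply, and this step in turn relies on the quasi-isometric reduction to a genuine cone.
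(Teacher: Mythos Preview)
Your arguments for (a) and (b) are correct and essentially the paper's, packaged a bit more directly (the paper routes the isomorphism through an auxiliary complex with the twisted metric $\langle\ ,\ \rangle_t$, but the content is identical).

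There is, however, a genuine error in your argument for (c). You assert that $tM_f$ is ``genuinely bounded on $\LLL^2$ by the local form of an admissible function near $\Sigma$''. This is false. The operator $M_f=\Lie_{\nabla f}+\Lie_{\nabla f}^*$ is zeroth order, but its coefficients blow up like $r^{-1}$ at each cone point: already on $0$-forms one has $M_f=-(\Delta f)\cdot$, and for $f=rh$ on the model cone $\Delta(rh)=r^{-1}(nh+\tilde\Delta_Lh)$. The explicit computation in Section~\ref{sectionmodel} confirms this: under the unitary $U$, the Witten Laplacian becomes $\TTT_t=\TTT+tr^{-1}M_h+t^2T^2$, so $M_f$ corresponds to $r^{-1}M_h$ with $M_h$ bounded on $L$ but the factor $r^{-1}$ unbounded. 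Hence the bounded-perturbation argument for discreteness does not go through, and one cannot simply read off $\dom(\Delta_t)=\dom(\Delta)$. The paper avoids this by appealing to the invariance of discreteness of the Hilbert-complex Laplacian under bounded (not necessarily unitary) complex isomorphisms, Lemma~2.17 in \cite{hilbert}, applied to the isomorphism you already constructed in (a).

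Your argument for (d) is along the right lines but skips a non-trivial step. You write that $D_{t,\max}^{\ev}$ ``decomposes into a direct sum indexed by even degrees $2k$''. For the minimal closure this is easy, but for the maximal operator the degree-$(2k{+}1)$ component of $D_t\omega$ is $d_t\omega_{2k}+\delta_t\omega_{2k+2}$, and $D_t\omega\in\LLL^2$ does not a priori force the two summands to lie in $\LLL^2$ separately. This decoupling is exactly the content of the relation \eqref{restriction2}, i.e.\ $p_k\dom((d_t+\delta_t)_{\max})=\dom(d_{t,k,\min})\cap\dom(\delta_{t,k-1,\min})$ for $|k-\nu|\neq 1$, which the paper takes over from \cite{leschcone}, Theorem~3.7(c). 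Once that is in hand your degree-by-degree comparison is correct.
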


\begin{remark} {\rm We call the  Laplacian $\Delta _t$ associated to the Hilbert complex $(\CCC _t, d_t, \langle\ , \ \rangle )$  the Witten Laplacian. By definition (of the Laplacian associated to a Hilbert complex) it is   the closed self-adjoint non-negative extension of  $\Delta _{t} \restriction _{ \Omega _0^* ( X \setminus \Sigma)}$ with domain:
 \begin{equation}\dom (\Delta _t ) = \big \{ \omega  \  | \  \omega,  d_t \omega,  \delta_t \omega, d_t \delta _t \omega, \delta _t d_t \omega \in \LLL ^2 \big ( \Lambda ^*(T^*(X \setminus  \Sigma)) \big ) \big \} . 
\end{equation} 

}
\end{remark}

\begin{proof} a) Let us denote by $ \langle \ , \  \rangle_t$ the twisted $\LLL^2$-metric: \begin{equation} \langle \alpha, \beta \rangle _t = \int_{X \setminus \Sigma} \alpha \wedge * \beta e^{-2tf}. \end{equation} Since $f$ is bounded on $X$ the two metrics $\langle \ , \  \rangle$ and $ \langle \ , \  \rangle _t$ are equivalent. We introduce the following auxiliary differential complex \begin{equation}\label{auxiliar}  (\Omega _0^*(X \setminus  \Sigma ),\widetilde{d_t}, \langle \ , \  \rangle_t),\end{equation} where $ \langle \ , \  \rangle_t $ is the twisted metric defined above  and $\widetilde{d_t}:= d$ is the usual differential. Using the invariance of the validity of the $\LLL^2$-Stokes theorem for equivalent metrics one deduces easily that the complex $(\Omega _0^*(X \setminus  \Sigma ),d, \langle \ , \  \rangle_t)$ admits a unique extension to a Hilbert complex, which we denote by
\begin{equation*} (\widetilde{\CCC _t}, \widetilde{d_t}, \langle \ , \  \rangle_t)= (\widetilde{\CCC _t}_{\max}, \widetilde{d_t}_{\max }, \langle \ , \  \rangle _t) = (\widetilde{\CCC _t}_{\min}, \widetilde{d_t}_{ \min}, \langle \ , \  \rangle_t). \end{equation*}

Since $d_t(e^{-tf} \omega) = e^{-tf}(\widetilde{d_t} \omega)$ the map
\begin{equation}\label{mapeft}
e^{-tf} : (\Omega _0^* (X \setminus  \Sigma ), \widetilde{d_t} , \langle \ , \  \rangle_t) \ra ( \Omega _0^* (X \setminus  \Sigma ) , d_t, \langle \ , \  \rangle), \quad \omega \mapsto e^{-tf} \omega
\end{equation}

\noindent is  an isomorphism of complexes. It is not difficult to see that the map \eqref{mapeft} extends to   isomorphisms of Hilbert complexes \begin{equation} (e^{-tf})_{\max/\min} : (\widetilde{\CCC _t}_{\max/\min}, \widetilde{d_t}_{\max / \min}, \langle \ , \  \rangle _t) \simeq  ({\CCC _t}_{\max/\min}, {d_t}_{\max/ \min}, \langle \ , \  \rangle).\end{equation} The claim  now follows from the validity of the  $\LLL^2$-Stokes theorem for the complex $(\widetilde{\CCC _{t}}, \widetilde{d_{t}}, \langle \ , \  \rangle_t) $.

 Alternatively we can prove the claim in a) also by the following argument: By Lemma \ref{vorbereitungstokes} (b)   one has 
\begin{equation} d_{t,k,\max} = d _{t,k, \min} , \ \delta _{t, k-1, \max} = \delta _{t, k-1, \min} \text{ for all  } k \not = \nu . \end{equation}
Therefore, since $\delta _{t,k,\min/\max}$ is the adjoint of $d_{t, k, \max/\min}$, we get  $d_{t,k,\max} = d _{t,k, \min}$ for all $k$.

(b) Since  $(\CCC, d, \langle \ , \  \rangle)$ is Fredholm, also $(\widetilde{\CCC _t} , \widetilde{d_t}, \langle \ , \  \rangle_t)$ is Fredholm. Thus by the isomorphism constructed in (a) also $(\CCC _t , d_t, \langle \ , \  \rangle )$ is Fredholm. The rest of the claim follows using the isomorphism in (a) and  general statements for Hilbert complexes in \cite{hilbert} (Theorem 2.4 and Corollary 2.5).

(c) We know already that $\Delta $ is discrete.  Since the  discreteness of the Laplacian associated to a Hilbert complex is invariant under complex isomorphism (see Lemma 2.17 in \cite{hilbert}), also $\Delta _t$ is discrete. The second claim follows from Lemma \ref{vorbereitungstokes}.

(d)  As for the complex of $\LLL^2$-forms  (compare \cite{leschcone}, Theorem 3.7 (c)) the closed extensions of $d_t + \delta _t$ are restricted by the relation \begin{equation}\label{restriction1} \dom ((d_t + \delta _t) _{\min} )  \cap \LLL^2 (\Lambda ^kT ^* X) = \dom (d_{t,k, \min}) \cap  \dom (\delta _{t,k-1,\min} ) , \quad k \not = \nu \end{equation} 
and \begin{equation}\label{restriction2} p_k \dom ((d_t + \delta _t )_{\max}) = \dom (d_{t,k, \min}) \cap \dom (\delta _{t, k-1, \min}) , \quad  |k - \nu| \not = 1  ,\end{equation}
where $p_k $ denotes the orthogonal projection $p_k : \LLL^2 (\Lambda ^* T^* (X \setminus \Sigma)) \rightarrow  \LLL^2 (\Lambda ^k T^* (X \setminus \Sigma))$.  Thus \eqref{gaussbonnet} follows from part (a), \eqref{gaussbonnetdef}, \eqref{restriction1} and \eqref{restriction2}. The Fredholm property of $D _{t}^{GB}$ is equivalent to the Fredholm property of the complex $(\CCC _t, d_t, \langle \ , \  \rangle)$ (see Theorem 2.4. in \cite{hilbert}). The last statement in (d) is now obvious. \end{proof}

Let $\Delta ^ {\FF} $ denote the Friedrichs extension of $\Delta \restriction _{ \Omega _0^*(X \setminus  \Sigma )}$. As a corollary we get

\begin{cor}\label{corstokes} \begin{itemize} \item[(a)] The form domains of $\Delta ^ {\FF} $ and $\Delta_t^ {\FF}$ coincide.  \item[(b)] For $k \not = \nu$ the form domain of  $  \Delta ^{(k)} $ and the form domain of the Witten Laplacian $\Delta ^{(k)} _t $ coincide. \end{itemize}\end{cor}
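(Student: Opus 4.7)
The plan is to reduce both statements to the boundedness of the multiplication operators $df\wedge$ and $\nabla f\haken$ on $\LLL^2(\Lambda^*(T^*(X\setminus\Sigma)))$ established in the proof of Lemma \ref{vorbereitungstokes}(a). Since the Witten deformation modifies $d$ and $\delta$ only by these bounded zeroth-order terms, the graph norms of the deformed and undeformed operators are equivalent, so their closures share the same domains. This is precisely the content of \eqref{domainsdeformed}, which is the engine behind both (a) and (b).

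For part (a), I would first recall that the form domain of the Friedrichs extension is, by construction, the completion of $\Omega_0^*(X\setminus\Sigma)$ with respect to the quadratic form $\omega \mapsto \norm{\omega}^2 + \norm{d\omega}^2 + \norm{\delta\omega}^2$; this completion is exactly $\dom(D_{\min})$ where $D = d+\delta$. Applying the same remark to the deformed complex shows that the form domain of $\Delta_t^{\FF}$ is $\dom(D_{t,\min})$. The third identity in \eqref{domainsdeformed}, namely $\dom(D_{t,\min}) = \dom(D_{\min})$, then yields (a) immediately.

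For part (b), I would combine Proposition \ref{propstokes}(c) with Lemma \ref{vorbereitungstokes}(b). The former gives $\Delta^{(k)} = \Delta^{\FF,(k)}$ and $\Delta_t^{(k)} = \Delta_t^{\FF,(k)}$ for $k\neq \nu$, while the latter expresses each as $d_{k-1,\min}\delta_{k-1,\min} + \delta_{k,\min}d_{k,\min}$ and its $t$-deformed analogue. The form domain of such an operator equals $\dom(d_{k,\min})\cap \dom(\delta_{k-1,\min})$ in the undeformed case and $\dom(d_{t,k,\min})\cap \dom(\delta_{t,k-1,\min})$ in the deformed case. The first two identities in \eqref{domainsdeformed}, together with the identifications $d_{t,k,\max}=d_{t,k,\min}$ and $\delta_{t,k-1,\max}=\delta_{t,k-1,\min}$ for $k\neq \nu$ (and the corresponding statement at $t=0$) then show that these two intersections coincide, proving (b).

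Essentially no genuine obstacle is expected here: once one has unpacked the definition of the form domain of a Friedrichs extension as $\dom(B_{\min})$ with $B$ the underlying first-order operator, the corollary is a bookkeeping consequence of the two preceding results. The only conceptual point one has to be explicit about is the standard fact that the form domain of a non-negative self-adjoint operator of the shape $A^*A + B^*B$ (with $A$, $B$ closed) is $\dom(A)\cap\dom(B)$; this is what allows passage from the operator domain identifications of Lemma \ref{vorbereitungstokes}(b) to the desired equality of form domains.
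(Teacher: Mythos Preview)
Your proposal is correct and essentially follows the paper's approach: both arguments rest on the boundedness of $df\wedge$ and $\nabla f\haken$ on $\LLL^2$. For part (a) the paper writes out the quadratic-form inequalities $\langle \Delta\omega,\omega\rangle \leq 2(\langle\Delta_t\omega,\omega\rangle + t^2\langle|\nabla f|^2\omega,\omega\rangle)$ (and the reverse) explicitly, whereas you invoke the equivalent statement already packaged in \eqref{domainsdeformed}; these are the same argument in different dress. For part (b) there is a mild difference in routing: the paper deduces (b) from (a) together with the identifications $\Delta^{(k)}=\Delta^{\FF,(k)}$ and $\Delta_t^{(k)}=\Delta_t^{\FF,(k)}$ for $k\neq\nu$ (the former from \cite{leschcone}, the latter from Proposition~\ref{propstokes}(c)), while you go directly through the characterisation of the Hilbert-complex Laplacian's form domain as $\dom(d_{k,\min})\cap\dom(\delta_{k-1,\min})$ and then apply \eqref{domainsdeformed}. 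Your route is slightly more self-contained; the paper's is shorter once (a) is in hand.
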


\begin{proof} (a) The form domain of $\Delta ^ {\FF}$ is the closure of $\Omega _0^*(X \setminus  \Sigma)$ under the norm \begin{equation} \norm{\omega }_1 ^2 := \norm{d \omega } ^2 + \norm{\delta \omega }^ 2 + \norm{\omega }^ 2.\end{equation} The form domain of $\Delta _{t}^ {\FF} $ is defined similarly. Note moreover that for $\omega \in \Omega _0^*(X \setminus \Sigma)$
\begin{equation}\label{normequ} \begin{split}   \langle \Delta \omega  , \omega  \rangle & = \norm{d \omega }^2 + \norm{\delta \omega }^2 \\ 
& \leq  2 \big ( \norm{d_t\omega }^2  +  \norm{\delta _t  \omega }^2 +t ^2 \langle \vert \nabla f \vert ^2 \omega ,  \omega \rangle   \big ) \\
& =   2 \big ( \langle \Delta _t \omega  , \omega  \rangle  +t ^2 \langle \vert \nabla f \vert ^2 \omega ,  \omega \rangle   \big ) .\end{split}\end{equation} Similarly  \begin{equation}\label{normequ2} \begin{split}   \langle \Delta _t\omega  , \omega  \rangle &  \leq  2  \big ( \langle \Delta  \omega  , \omega  \rangle  +t ^2 \langle \vert \nabla f \vert ^2 \omega ,  \omega \rangle   \big ) . \end{split}\end{equation}
 The claim  in (a) now follows easily since $\vert \nabla f \vert ^2$ is bounded on $X$. The claim in (b) is a consequence of Part a),  Proposition \ref{propstokes} (c) and the fact that $\dom (\Delta ^{(k)}) = \dom (\Delta ^{ \FF , (k)})$, $k \not =\nu $ (see Theorem 3.7 (b) in \cite{leschcone}). 

\end{proof}

\section{The local model} \label{sectionlocalmodel}

From now on we will assume that $f: X \rightarrow \RRR$ is an admissible Morse function, as in Definition \ref{defadmissible} (b).

In this section we  define a local model operator for the  Witten Laplacian near a singular point  $p \in \Sigma$. Let us recall  that a sufficiently small neighbourhood $U_p$ of $p$ is homeomorphic to $\cone(L_p)$, where $L_p$ is the link of the singularity.  In the following we denote by \begin{equation} \cone(L_p) := [0, \infty) \times L_p / \{0 \} \times L_p \end{equation}  the infinite cone over $L_p$. For $\epsilon > 0$ we denote by $\cone _{\epsilon }(L)$ the truncated open cone \begin{equation} \cone _{\epsilon }(L) := \{ (r, \varphi) \in \cone (L) \mid r < \epsilon \} .\end{equation}  

\subsection{The local Morse data}

 Let $B_{\epsilon}(p)$ be the closed $\epsilon$-ball around $p \in \Sigma$ in $X$. Let us choose $0 < \delta << \epsilon$. Since $p$ is an isolated singular point of $X$ the local Morse data of $f$  at $p \in \Sigma$ reduces in this case to the normal local Morse data.  They are defined as the pair of spaces
\begin{equation} (M_p, l_p^-) := \Big ( B_{\epsilon}(p) \cap f^{-1} ([f(p) - \delta, f(p) + \delta ]) , B_{\epsilon}(p) \cap f^{-1} ( f(p) - \delta ) \Big ) .\end{equation} The local Morse data are independent of the  choice of $0 < \delta << \epsilon$ small enough.   The set  
\begin{equation}  l^- _p:= B_{\epsilon}(p) \cap f^{-1} ( f(p) - \delta ) \end{equation} can be seen as an ``exit set'' for the negative gradient flow.   As in stratified Morse theory (see \cite{goresky}, pg. 66) we will call $l_p^-$ the lower halflink of $f$ at $p$. It is not difficult to see that the   pair $(M_p, l_p^-)$ is homeomorphic to  $(B_{\epsilon } (p) , l^-_p)$   and thus \begin{equation} IH^* (M_p, l_p^-) \simeq IH^* (\cone (L _p), l^-_p) , \end{equation} 
where $IH^*$ denotes intersection cohomology with closed support. 

For simplicity we will  write from now on $L$, $l^-$, etc. instead of $L_p$, $l_p^ -$, etc. in this section. We will also assume  that $f (p) =0$. 

\subsection{Definition of the  model operator. Local spectral gap theorem}\label{deflocalmodel}

Let us fix $\epsilon > 0$. Let $\eta _{\epsilon} : \cone(L) \rightarrow [0,1] $ be a smooth cutoff function, with $\eta_ {\epsilon}  \restriction _{  \cone _{\epsilon}(L) } \equiv 1  $  and $\eta _{ \epsilon  } \restriction _{  \cone (L) \setminus \cone _{ 2 \epsilon}(L)  } \equiv 0 . $ 

We denote by $(\cone(L), g_{\conf})$ the infinite cone  over $L$ equipped with the metric
\begin{equation}\label{metric}
g_{\conf} = \eta _{\epsilon} g + (1- \eta _{\epsilon}) g_{\cone}, \text{ where }g_{\cone} = dr^2 + r^2 g_L(0).
\end{equation}

 Let $f : \cone(L) \rightarrow \RRR$ be a function on the infinite cone such that:
\begin{equation}\label{function}f = \eta _{\epsilon} (f_1 + f_2 ) + (1- \eta _{\epsilon} ) f_1, \end{equation}
\noindent  where $f_1 = rh, f_2 = O(r^{1+ \delta})$, $\vert \nabla f \vert ^2 \geq a^ 2 > 0$.

Let $\langle \ , \ \rangle := \langle \ , \ \rangle_{\conf}$ be the $\LLL^2$-metric on forms induced by $g_{\conf}$. Let $ \big ( \Omega ^*_0 (\cone(L)), d , \langle \ , \ \rangle \big )$ be the de Rham complex of smooth compactly supported forms on the infinite cone $ \big ( \cone(L) , g_{\conf} \big )$. We denote by $(\Omega ^ *_0 (\cone(L)), d_t, \langle \  , \   \rangle)$  the complex obtained by deforming the de Rham complex by means of the function $f$, i.e. $d_t :=  e^{-tf} d e^{tf}$.

\newpage

\begin{theorem}\label{hodgelocal}
 \item[(a)] There is a unique Hilbert complex $(\DD _t, d_t, \langle \ , \ \rangle)$ extending the complex  \begin{equation*}   \big ( \Omega ^ *_0 (\cone(L)), d_t, \langle \ , \  \rangle \big  ).\end{equation*}  
\item[(b)] The complex $(\DD_ t,d _t,\langle \ , \  \rangle) $ is Fredholm.  The natural maps 
\begin{equation}\label{hodgeisomlocal} \ker (\DDD _{t}^{(i)} ) \ra H^i(\DD _t, d_t, \langle \ , \ \rangle) , \quad i = 0, \ldots , 2 \nu ,\end{equation} 
are isomorphisms, where  $\DDD_t$ denotes the Laplacian associated to the Hilbert complex $(\DD _t, d_t, \langle \ , \ \rangle)$ and $\DDD_t^{(i)}$ its restriction to $i$-forms. 
\item[(c)] The model Witten Laplacian $\DDD_t$ satisfies a local spectral gap theorem: there exists $c>0$ such that for $t$ large enough \begin{equation} \spec (\DDD  _{t}^{(i)}) \subset \{0 \} \cup  [ct^2 , \infty). \end{equation} Moreover all forms in $\ker ( \DDD _{t})$, as well as their derivatives have  exponential decay outside a small neighbourhood of the singularity.
\item[(d)] For the cohomology of the  complex $(\DD _t, d_t, \langle \ , \ \rangle)$ one gets \begin{equation} H^i(\DD _t, d_t, \langle \ , \ \rangle) \simeq IH ^i(\cone  (L), l^-). \end{equation}

\end{theorem}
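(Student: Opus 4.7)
Parts (a) and (b) I would handle by direct adaptation of the arguments of Section~\ref{sectionhodget}. The metric $g_{\conf}$ agrees with the conformally conic metric near the apex and is an exact cone metric at infinity, so both the proof of uniqueness of the ideal boundary condition via the twist $\omega\mapsto e^{-tf}\omega$ (as in Proposition~\ref{propstokes}(a)) and the Fredholm/Hodge-theoretic statements of Proposition~\ref{propstokes}(b)--(c) transfer verbatim, using on the conic end the standard $\LLL^2$ theory for cones on closed manifolds.

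For part (c), Lemma~\ref{wittenlaplacian} applied on $\cone(L)$ decomposes the model Witten Laplacian as $\DDD_t=\Delta+tM_f+t^2|\nabla f|^2$, where $\Delta$ denotes the untwisted Laplacian on $\cone(L)$. Since $|\nabla f|^2\ge a^2>0$ everywhere on $\cone(L)$, a Bochner-type estimate yields $\langle\DDD_t\omega,\omega\rangle\ge(t^2a^2-Ct)\norm{\omega}^2$ on smooth compactly supported forms away from the apex. Near the apex the zero-order term is no longer uniformly dominant because the metric is singular; there I would decompose $\DDD_t$ by separation of variables along the cone into a family of regular singular ordinary differential operators on $(0,\infty)$ and control their spectrum using the perturbation techniques for regular singular operators of \cite{leschcurve}. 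Combining the two estimates gives $\spec(\DDD_t^{(i)})\subset\{0\}\cup[ct^2,\infty)$, and an Agmon-type estimate (again using $|\nabla f|\ge a$) yields exponential decay of the harmonic forms and their derivatives outside a small neighbourhood of the apex.

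Part (d) is the geometric heart of the theorem and I would approach it via the cone construction of \cite{braverman}. Applying the twist $\omega\mapsto e^{-tf}\omega$ once more reduces the problem to computing the cohomology of $(\Omega_0^*(\cone(L)),d,\langle\ ,\ \rangle_t)$ with $\langle\alpha,\beta\rangle_t:=\int\alpha\wedge*\beta\,e^{-2tf}$. For large $t$, the weight grows exponentially in the region where $f$ is negative and forces twisted-$\LLL^2$ forms to decay there at rate $e^{tf}$, which plays the role of a Dirichlet-type boundary condition along the lower halflink $l^-$; in the region where $f$ is positive the weight decays and imposes no new constraint. The resulting cohomology is then identified with the relative $\LLL^2$-cohomology of the upper piece $U=\{f\ge -\delta\}\cap\cone(L)$ modulo its halflink boundary $l^-$; since $U$ deformation-retracts along the flow of $-\nabla f$ onto a conic neighbourhood of the apex homeomorphic to $\cone(L)$, Cheeger's identification of $\LLL^2$-cohomology on cones with intersection cohomology, applied in its relative form, then gives $IH^*(\cone(L),l^-)$.

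The main obstacle will be the precise translation in part (d) of the twisted-$\LLL^2$ Dirichlet-type condition at the halflink into the relative intersection-cohomology condition: one must show that every class in $H^*(\DD_t,d_t,\langle\ ,\ \rangle)$ has a representative whose trace vanishes on $l^-$ in the appropriate sense, and conversely that every relative class is represented. The exponential decay from part (c) is essential here, as it justifies replacing the infinite cone by a truncated one without losing cohomology, reducing the analysis to Cheeger's well-understood $\LLL^2$ theory on truncated cones.
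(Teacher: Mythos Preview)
Your outline for (a) is fine and matches the paper. The real issue is that you have the logical dependence between (b) and (c) inverted, and in doing so you miss the paper's key structural idea.

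You claim that the Fredholm property in (b) ``transfers verbatim'' from Proposition~\ref{propstokes}. It does not: in Proposition~\ref{propstokes} the Fredholm property of $(\CCC_t,d_t)$ is inherited from that of $(\CCC,d)$, which in turn relies on $X$ being \emph{compact}. The infinite cone is not compact, and the undeformed Laplacian on $\cone(L)$ has continuous spectrum reaching $0$; the deformation is what creates the gap. So one genuinely has to prove $0\notin\spec_{\ess}(\DDD_t)$ from scratch. In the paper this is the hardest analytical step: for degrees $k\neq\nu$ one uses exactly the regular-singular perturbation techniques of \cite{leschcurve} that you mention (your Lemmas~\ref{lemma1}--\ref{lemma2}), applied to the Friedrichs extension $\TTT_1^{\FF}$; for the middle degree $k=\nu$, where $\DDD_t^{(\nu)}$ is \emph{not} the Friedrichs extension, an additional Hilbert-complex argument (Lemma~\ref{lemmahilbert}) is needed to rule out essential spectrum at $0$. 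Your proposal contains no mechanism for the middle degree.

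Conversely, once (b) is established, part (c) is almost immediate and does not require the two-region Bochner/perturbation splitting you sketch. The point is the exact scaling $\DDD_t = t^2 R_t\,\DDD_1\,R_t^{-1}$ (Lemma~\ref{lemrescaling}): from $0\notin\spec_{\ess}(\DDD_1)$ and $\DDD_1\ge 0$ one gets $\spec(\DDD_1)\subset\{0\}\cup[c,\infty)$, and rescaling gives $\spec(\DDD_t)\subset\{0\}\cup[ct^2,\infty)$ for \emph{all} $t>0$. Your Bochner estimate, by contrast, only works for $t$ large and would require gluing to a cone-tip estimate that is not obviously available, precisely because the first-order term carries an $r^{-1}$ factor.

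For (d) your reference to \cite{braverman} is right, but your heuristic (twisted metric as Dirichlet condition along $l^-$, then retract along $-\nabla f$) is not how the argument is actually implemented. The paper builds an explicit chain map $\Phi$ from the smooth subcomplex $\DD_t^\infty$ into the algebraic mapping cone of the restriction $j_c^*:\EEE^*\to\Omega^*(U_c^-)$, where $\EEE^*$ consists of forms that are $\LLL^2$ only locally at the apex, and proves $\Phi$ is a quasi-isomorphism by hand. The exponential decay from (c) is used exactly where you anticipate, namely to produce preimages that are honestly $\LLL^2$ at infinity; but the identification with $IH^*(\cone(L),l^-)$ comes from the long exact sequence of the mapping cone together with $H^*(\EEE)\simeq IH^*(\cone(L))$ and $H^*(\Omega^*(U_c^-))\simeq IH^*(l^-)$, not from a retraction of an ``upper piece''.
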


\begin{remark}{\rm We call the operator $\DDD _{t}$ defined in Theorem \eqref{hodgelocal} (b) the model Witten Laplacian. By definition (of the Laplacian associated to a Hilbert complex), the Laplacian $\DDD_t$ associated to the  complex $ \big ( \DD_t,d _t, \langle \ , \  \rangle \big ) $ has  domain 
\begin{equation}\label{domainlocalwitten}
\dom (\DDD _{t} )=  \big \{ \omega \mid \omega , d_t \omega, \delta _t \omega, d_t \delta _t \omega,   \delta _t d_t\omega \in \LLL^2 (\Lambda ^*(\cone(L))) \big  \}.\end{equation} }\end{remark}

For simplicity of presentation we prove Theorem \ref{hodgelocal} only in the case of the infinite cone $\left ( \cone (L), g_{\cone} = dr^2 + r^2 g_L(0) \right )$ equipped with the conic metric and the function $f= rh$. The perturbation argument needed to extend the proof to the case of a conformally conic metric and the more general function $f= rh + O(r^{1+\delta})$  are detailed in \cite{ulcurve} for the case of a complex curve and can easily be adapted to the situation here.

\subsection{A useful unitary transformation}\label{sectionmodel}

From now on in this section we always treat the model case $\big ( \cone (L), g_{\cone} = dr^2 + r^2 g_L(0) \big )$, $f=rh$.

Let us denote by $n:= \dim L = 2 \nu -1$.  We denote by $\pi $  the projection $\pi : L \times \RRR^+ \ra L$ and following \cite{brueningseeley} we define bijective maps \begin{equation} \begin{array}{lccl} U_k : &  C_0^{\infty} (\RRR^+, \Omega ^ {k-1} (L) \oplus \Omega ^ {k} (L) ) &  \longrightarrow &  \Omega _0^k ( \cone (L) ) , \\
&  ( \phi _{k-1}, \phi _k) & \mapsto & r^{k-1-n/2} \pi ^ * \phi _{k-1} \wedge dr + r^{k-n/2} \pi^ * \phi _k, \\
\end{array} \end{equation}
which extend to unitary maps 
\begin{equation} U _k : \LLL^2   \big ( \RRR^+, \LLL^2 (\Lambda ^{k-1} T^*L\oplus \Lambda ^k T^* L, g_L(0))\big  ) \ra \LLL^2 \big ( \Lambda ^k T^*(\cone (L))  \big ),  \end{equation}

\noindent $ k = 0, \cdots, n+1$.

Let  $\Omega ^{\ev},\Omega ^{\odd}$ denote even and odd forms, respectively. The $U_k$'s induce unitary maps
\begin{equation*}\begin{array}{lllcl}  U_{\ev} & : &  C_0^{\infty} (\RRR_+ , \Omega ^*(L)) & \longrightarrow & \Omega _0^{\ev} (\cone(L)) , \\ &  &  (\phi _0, \ldots, \phi _n ) &\mapsto & (U_0(0, \phi _0), U _2(\phi _1, \phi _2), \ldots, U_{n+1} (\phi _n, 0)), \\
\\
 U_{\odd} & : &  C_0^{\infty} (\RRR_+ , \Omega ^*(L)) & \longrightarrow & \Omega _0^{\odd} (\cone(L)) , \\ & &  (\phi _0, \ldots, \phi _n ) & \mapsto & (U_1(\phi _0, \phi _1), U _3(\phi _2, \phi _3), \ldots, U_{n} (\phi _{n-1}, \phi    _n)).\end{array}  \end{equation*} 

\noindent  We denote by  $\D ^{\ev}$ the following operator acting on even forms 
\begin{equation} \D ^{\ev} := d + \delta : \Omega ^{\ev}_0 (\cone(L))\rightarrow \Omega ^{\odd} _0(\cone(L)) .\end{equation} In the following operators on the link are  labeled with  $\widetilde{\ }$.  An easy computation (see \cite{brueningseeley}, Section 5) shows that 
\begin{equation} U^{-1}_{\odd} \D^{\ev} U_{\ev} = \partial _r +r^{-1} S_0 ,\end{equation}
where $S_0$ is the operator \begin{equation} S_0:=  \left (  \begin{array}{cccccc}
c_0 & \tilde \delta & & & &\\ 
\tilde d & c_1 & \tilde \delta & & & \\
& \tilde d & c_2 & \tilde \delta & & \\
 &  & \ddots & \ddots & \ddots &  \\
& & & \tilde d & c_{n-1} & \tilde \delta \\
& & & &       \tilde d & c_n \\
\end{array} \right ),\quad  c_i := (-1)^i \left (i - \frac{n}{2}\right ). \end{equation}

Similarly for $\D ^{\odd} := d + \delta : \Omega ^{\odd}_0 (\cone(L))\rightarrow \Omega ^{\ev} _0(\cone(L))$ we get \begin{equation} U^{-1}_{\ev} \D^{\odd} U_{\odd} = - \partial _r +r^{-1} S_0 .\end{equation}

For the Laplacian $\DDD^{\ev/\odd} $ acting on even resp. odd forms on the infinite cone we then get: \begin{equation}\label{deft}\begin{split} & \TTT ^{\ev} := U^{-1}_{\ev} \DDD ^{\ev}  U_{\ev}= - \partial _r^2 + r^{-2} (S_0^2 + S_0) ,  \\ & \TTT ^{\odd}:= U^{-1}_{\odd} \DDD ^ {\odd} U_{\odd}= - \partial _r^2 + r^{-2} (S_0^2 - S_0) .   \end{split} \end{equation}

For the   operators \begin{equation}\begin{split} &  \D_{t}^{\ev} := d_t + \delta_t : \Omega ^{\ev} _0(\cone(L))\rightarrow \Omega ^{\odd}_0(\cone(L)), \\ 
& \D_{t}^{\odd} : = d_t + \delta_t : \Omega ^{\odd} _0(\cone(L))\rightarrow \Omega ^{\ev}_0(\cone(L))  \\ \end{split}\end{equation}

\noindent associated to the  deformed complex one computes easily that \begin{equation}\begin{split} &  U^{-1}_{\odd} \D_{t}^{\ev} U_{\ev} = \partial _r +r^{-1} S_0 + t T, \\ & U^{-1}_{\ev} \D_{t}^{\odd} U_{\odd} = -\partial _r +r^{-1} S_0 + t T \\ \end{split} \end{equation}

\noindent where $T$ is the operator \begin{equation} T =  \left (  \begin{array}{cccccc}
(-1)^0 h & \widetilde \nabla h \haken & & & &\\ 
\widetilde dh  & (-1) h  & \widetilde \nabla h\haken  & & & \\
& \widetilde dh & (-1)^2h & \widetilde \nabla h \haken & & \\
 &  & \ddots & \ddots & \ddots &  \\
& & & \widetilde dh  & (-1)^{n-1}  h   & \widetilde \nabla h \haken \\
& & & &      \widetilde  d h & (-1)^n h  \\
\end{array} \right ).\end{equation}


 For the model Witten Laplacian acting on even/odd  forms one gets:
\begin{equation}\label{tevoddt} \TTT _t^{\ev/\odd} := U^{-1}_{\ev/\odd} \DDD _t^{\ev/\odd}  U_{\ev/\odd}= \TTT ^{\ev/\odd} + t r^{-1} M_h + t^ 2T^2,  \end{equation} 
where \begin{equation}\label{Mh} \begin{split} M_h & := (TS_0 +S_0T ) \\ & \\ & =  \left ( \begin{array}{ccccc}
\widetilde{ \Lie _{\nabla h }} + \widetilde{ \Lie _{\nabla h}^*} + 2 (-1) ^ 0 c_0 h & & & &  \\
& & \ddots & & \\
& & & \widetilde{ \Lie _{\nabla h }} + \widetilde{ \Lie _{\nabla h}^* }+ 2 (-1) ^  {n-1} c_{n-1} h & \\
\end{array} \right ) \end{split}\end{equation}

\noindent and \begin{equation} T^2 = \vert \nabla f \vert  ^2 \cdot \mathrm{Id} = ( h^2 + \vert \widetilde \nabla h \vert ^2) \cdot \mathrm{Id}  .\end{equation}

In the sequel the following rescaling argument will be useful:

\begin{lemma}\label{lemrescaling} For $t >0$ denote by $R_t$ the  unitary rescaling operator \begin{equation} R_t (f(  r, \varphi ) ) = \sqrt{t} f(tr, \varphi). \end{equation} Then \begin{equation}  \DDD _t  = t^2 R_t \DDD _1R_t^{-1}. \end{equation}
\end{lemma}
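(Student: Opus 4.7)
The plan is to reduce the claim to the unitarily equivalent model on $\LLL^2(\RRR^+, \LLL^2(\Lambda^*T^*L))$ via the isomorphisms $U_{\ev/\odd}$ of Section \ref{sectionmodel}, which intertwine $\DDD_t^{\ev/\odd}$ with the explicit operators $\TTT_t^{\ev/\odd}$ recorded in \eqref{tevoddt}. Since $R_t$ is unitary on this transformed $\LLL^2$ space and the $U_{\ev/\odd}$ are unitary, it suffices to prove the companion identity
\[
\TTT_t^{\ev/\odd} \;=\; t^2\, R_t\, \TTT_1^{\ev/\odd}\, R_t^{-1},
\]
with $R_t$ on the cone side being understood as $U_{\ev/\odd}\, R_t\, U_{\ev/\odd}^{-1}$.

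The next step is to work out the conjugation action of $R_t$ on the basic building blocks of $\TTT_1^{\ev/\odd}$. A short direct computation from the definition $(R_t\phi)(r,\varphi) = \sqrt{t}\,\phi(tr,\varphi)$ together with the chain rule gives the scaling rules
\[
R_t\, \partial_r\, R_t^{-1} \;=\; t^{-1}\, \partial_r, \qquad R_t\, r^{-k}\, R_t^{-1} \;=\; t^{-k}\, r^{-k} \quad (k \geq 1),
\]
while any operator depending only on the link variable $\varphi$ commutes with $R_t$. In particular $S_0^2 \pm S_0$, $M_h$, and $T^2 = h^2 + \abs{\widetilde\nabla h}^2$ are all invariant under conjugation by $R_t$.

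Finally, applying these rules to each summand of
\[
\TTT_1^{\ev/\odd} \;=\; -\partial_r^2 + r^{-2}(S_0^2 \pm S_0) + r^{-1} M_h + T^2,
\]
one sees that the first two terms pick up a factor $t^{-2}$, the third a factor $t^{-1}$, and the last is unchanged. Multiplying through by $t^2$ therefore produces exactly $-\partial_r^2 + r^{-2}(S_0^2 \pm S_0) + t r^{-1} M_h + t^2 T^2$, which agrees with $\TTT_t^{\ev/\odd}$ by \eqref{tevoddt}. The stated identity $\DDD_t = t^2\, R_t\, \DDD_1\, R_t^{-1}$ then follows by conjugating back through $U_{\ev/\odd}$. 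I do not expect any substantive obstacle; once the two conjugation formulas for $\partial_r$ and $r^{-k}$ are in hand, the rest is a matter of checking the weight of each summand.
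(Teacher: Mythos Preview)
Your proof is correct and is precisely a detailed execution of what the paper dismisses as ``an easy computation'': you pass to the half-line model via $U_{\ev/\odd}$, read off the scaling weights of the four summands of $\TTT_1^{\ev/\odd}$ under conjugation by $R_t$, and match with \eqref{tevoddt}. The only cosmetic point is that the lemma's $R_t$ is most naturally interpreted on the half-line model $\LLL^2(\RRR^+,\LLL^2(\Lambda^*T^*L))$ (as you implicitly do), since the formula $f(r,\varphi)\mapsto\sqrt{t}\,f(tr,\varphi)$ would not by itself be unitary on $\LLL^2(\Lambda^*T^*\cone(L),g_{\cone})$ without the degree-dependent weights built into $U_k$; your parenthetical remark identifying the cone-side $R_t$ with $U\,R_t\,U^{-1}$ is exactly the right way to handle this.
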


\begin{proof} The claim follows by an easy computation. \end{proof}

\subsection{Proof of Theorem \ref{hodgelocal} (a)}

The proof is similar to that of Proposition \ref{propstokes} (a). \hfill $\Box$

\subsection{Proof of Theorem \ref{hodgelocal} (b)}

Recall that by Definition \ref{defadmissible} of an admissible Morse function there exists a constant $a>0$ such that \begin{equation}\label{fc} \vert \nabla f \vert ^2 = h^2 + \vert \widetilde \nabla h \vert ^2 \geq a^2 .\end{equation}
Moreover by \eqref{Mh}  $M_h$ is a bounded zeroth order operator on the link $L$, {\it i.e.} there exists $C>0$ such that  \begin{equation}\label{MC}   \| M_h \|  \leq C.\end{equation}

Let us fix $\delta >0$ arbitrarily small, $0 < \delta << \epsilon$.  Let $\eta _1 : \cone (L) \rightarrow [0,1]$ be a cutoff function with $\supp{\eta _1} \subset \cone_{\delta}(L)$ and depending only on the radial coordinate, $\eta _1( r , \varphi) = \eta _1(r)$.
Let us denote by 
\begin{equation}\label{taudef}\tau := \TTT +  2 r^{-1} (S_0 \tilde{T} + \tilde{T} S_0 ), \text{ where } \tilde{T} = \eta _1T. \end{equation} 
Let us denote by $\norm{\ }_{\tau}$ resp. $\norm{\ }_{\TTT}$ the two norms 
\begin{equation} \norm{u}_{\tau} ^2:= (\tau u , u) +\norm{u}^2,\end{equation}
\begin{equation} \norm{u}_{\TTT}^2 := (\TTT u,u) + \norm{u}^2 .\end{equation}

The operator $\pm \partial _r + r^{-1} (S_0 + r \tilde{T})$ will be treated as a perturbation of the operator  $\pm \partial _r + r^{-1} S_0 $. We apply the techniques in \cite{leschcurve}, Section 3 to handle the situation.

\begin{lemma}\label{lemma1}\begin{itemize} \item[(a)] The operator $\tau$ is a perturbation of $\TTT$ which can be written as \begin{equation} \tau  =  \TTT  + \sum _{i,j} \Phi _j^* C_{ij} \Phi _i, \end{equation} where $C_{ij}$ are operator valued functions with support in $\cone _{\delta}(L)$ and \begin{equation*} \max _{ij}\|C_{ij}\| =: \widetilde\epsilon (\delta) \end{equation*} can be made arbitrarily small by choosing  $\delta$ small enough. Moreover the operators $\Phi _i$ are controlled by $\TTT$, {\rm i.e.} there exists $c_i >0$ such that  \begin{equation} \norm{\Phi _iu}^2 \leq c_i \norm{u}_{\TTT} ^2\text{ for } u \in C _0 ^{\infty} ( \RRR _+ , \Omega ^{*-1} (L) \oplus \Omega ^{*} (L)). \end{equation}
\item[(b)]  There exists $c>0$ such that for $u\in C _0 ^{\infty} ( \RRR _+ , \Omega ^{*-1} (L) \oplus \Omega ^{*} (L))$ \begin{equation}\label{tau} (\tau u, u) \geq - c \widetilde \epsilon(\delta) \norm{u}^2 =: - \epsilon(\delta) \norm{u}^2. \end{equation}
\end{itemize}
\end{lemma}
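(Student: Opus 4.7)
The plan is to compute explicitly the perturbation $\tau - \TTT$, factor it in the required form with a single term, and then deduce (b) from (a) by a standard absorption argument. The key initial observation is that $\eta_1(r)$ depends only on the radial coordinate and therefore commutes with the link operator $S_0$. Combined with the identity $M_h = S_0 T + T S_0$ from \eqref{Mh}, this collapses the perturbation to a purely radial multiplier:
\begin{equation*}
\tau - \TTT \;=\; 2r^{-1}(S_0 \tilde T + \tilde T S_0) \;=\; 2\eta_1(r)\, r^{-1}\, M_h,
\end{equation*}
where $M_h$ is a bounded zeroth-order operator on forms of $L$, so $K := \|M_h\|_\infty < \infty$ depends only on $h$ and $L$. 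Since $r^{-1}$, $\eta_1(r)$, and $M_h$ pairwise commute (the first two are radial scalars, $M_h$ acts fibrewise on $L$), the factorization
\begin{equation*}
\tau - \TTT \;=\; (r^{-1})^*\, C_{11}\, (r^{-1}), \qquad C_{11} := 2\eta_1(r)\, r\, M_h,
\end{equation*}
gives the form required in (a) with a single summand, taking $\Phi_1 := r^{-1}$. The coefficient $C_{11}$ is supported in $\cone_\delta(L)$ and satisfies $\|C_{11}\| \leq 2K \sup_{r\geq 0}\bigl(\eta_1(r)\, r\bigr) \leq 2K\delta =: \widetilde\epsilon(\delta)$, which tends to $0$ as $\delta \to 0$. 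The form-bound $\|\Phi_1 u\|^2 \leq c_1 \|u\|_\TTT^2$ follows from the quadratic form representation \eqref{deft} of $\TTT$ together with Hardy's inequality $\|r^{-1} u\|^2 \leq 4\|\partial_r u\|^2$ on $\RRR^+$.

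For part (b), Cauchy--Schwarz applied to the bilinear form gives
\begin{equation*}
(\tau u, u) \;=\; (\TTT u, u) + (C_{11} \Phi_1 u, \Phi_1 u) \;\geq\; (\TTT u, u) - c_1 \widetilde\epsilon(\delta)\,\|u\|_\TTT^2.
\end{equation*}
Writing $\|u\|_\TTT^2 = (\TTT u, u) + \|u\|^2$ and rearranging yields
\begin{equation*}
(\tau u, u) \;\geq\; \bigl(1 - c_1 \widetilde\epsilon(\delta)\bigr)(\TTT u, u) - c_1 \widetilde\epsilon(\delta)\,\|u\|^2,
\end{equation*}
and for $\delta$ small enough that $c_1 \widetilde\epsilon(\delta) \leq 1$ the coefficient of $(\TTT u, u)$ is nonnegative. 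This gives the claim of (b) with $c := c_1$ (equivalently, one rescales $\epsilon(\delta) := c_1\widetilde\epsilon(\delta)$).

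The only delicate point, and the expected main obstacle, is the uniform form-boundedness of $\Phi_1 = r^{-1}$ by $\TTT$. On eigenmodes of $S_0$ with eigenvalue $\lambda$ satisfying $\lambda(\lambda \pm 1) > 0$, the bound is immediate from $(\TTT u, u) \geq \lambda(\lambda \pm 1)\|r^{-1} u\|^2$; on the finitely many modes with $\lambda \in [-1, 1]$ the naive estimate degenerates, and in particular reaches the borderline Hardy case at $\lambda = \mp 1/2$ where $\lambda(\lambda \pm 1) = -1/4$. These exceptional low-lying modes must be treated by decomposing the form domain into a regular part (where the quadratic form of $\TTT$ directly dominates $\|r^{-1}u\|^2$) and a finite-dimensional singular part, using the regular-singular ODE perturbation toolbox of \cite{leschcurve}, Section 3, which the paper explicitly invokes for precisely this analysis.
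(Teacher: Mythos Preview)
Your argument for part (b) is correct and essentially identical to the paper's: both deduce from the factorization in (a) that $|\langle(\tau-\TTT)u,u\rangle|\leq c\widetilde\epsilon(\delta)\|u\|_\TTT^2$, rearrange using $\|u\|_\TTT^2=(\TTT u,u)+\|u\|^2$, and drop the nonnegative term $(1-c\widetilde\epsilon(\delta))(\TTT u,u)$.

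Your part (a) contains a real gap, and it is exactly the one you flag in the last paragraph --- but the fix is not what you describe. The algebraic factorization $\tau-\TTT=(r^{-1})^*(2\eta_1 r M_h)(r^{-1})$ is fine; what fails is the form bound $\|r^{-1}u\|^2\leq c_1\|u\|_\TTT^2$. On an $S_0$-eigenspace with $\lambda(\lambda\pm1)=-1/4$ (i.e.\ $\lambda=\mp1/2$, which occurs on harmonic $(\nu-1)$- or $\nu$-forms of $L$ whenever $H^{\nu-1}(L)\neq0$), one has $(\TTT u,u)=\|\partial_r u\|^2-\tfrac14\|r^{-1}u\|^2$. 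Testing on $u_n=c_n r^{1/2}\chi_n(r)$ with cutoffs $\chi_n$ supported in $[1/n,1]$ and normalised so $\|u_n\|=1$ gives $\|r^{-1}u_n\|^2\sim\log n\to\infty$ while $(\TTT u_n,u_n)=c_n^2\int r(\chi_n')^2\,dr$ stays bounded. Hence no constant $c_1$ can work for this choice of $\Phi_1$, and Hardy's inequality (which is sharp with constant $1/4$) cannot rescue it.

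The resolution in \cite{leschcurve}, Section~3 --- which the paper simply invokes without details --- is not to keep $\Phi_1=r^{-1}$ and treat the bad modes by a side argument, but to use \emph{different} $\Phi_i$'s that are genuinely $\TTT$-form-bounded (built from the first-order pieces $\pm\partial_r+r^{-1}S_0$, or from $r^{-1}$ composed with spectral projections of $S_0$ away from the critical strip). So your proposed factorization does not establish (a) in the generality claimed; the correct factorization is the one implicit in \cite{leschcurve}, and your explicit attempt, while illuminating on the regular modes, breaks precisely where you say it does.
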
 

Proof: (a) A computation similar to that in \cite{leschcurve} shows the claim. (b) As a consequence of (a) one has 
\begin{equation} \vert \norm{u} _{\tau}^2 - \norm{u}_{\TTT}^2 \vert \leq c \widetilde \epsilon (\delta) \norm {u} _{\TTT}^2 \text{ for } u \in C _0 ^{\infty} ( \RRR _+ , \Omega ^{*-1} (L) \oplus \Omega ^{*} (L)) \end{equation}
for an appropriate constant $c>0$.
 Hence
\begin{equation} \langle \tau u,u \rangle  \geq (1- c\widetilde \epsilon (\delta)  ) \langle \TTT u, u\rangle - c \widetilde \epsilon (\delta) \norm{u}^2 \geq - c\widetilde \epsilon(\delta) \norm{u}^2, \end{equation}
since $\langle \TTT u,u \rangle \geq 0$ for $u\in C _0 ^{\infty} ( \RRR _+ , \Omega ^{*-1} (L) \oplus \Omega ^{*} (L))$. \hfill $\Box$

Using the boundedness of $M_h$ (see \eqref{MC}) there exist $\gamma >0$ such that for any cut-off function $\eta _2 : \cone (L) \rightarrow [0,1]$, $\supp (\eta _2) \subset \cone (L) \setminus \cone _{\gamma}(L) , \eta _{2} \restriction_{  \cone (L) \setminus \cone _{2 \gamma}(L)} \equiv 1$ we have
\begin{equation}\label{MC2} \vert \langle \eta _2 r^{-1} M_h u, u \rangle  \vert \leq \epsilon (\delta )  \norm{u}^2.\end{equation}
 We write  the term $r^{-1}M_h$ as a sum 
\begin{equation} r^{-1} M_h = V_1+ V_2 +V_3,\end{equation}
where $V_1 := r^{-1} (S_0 \tilde{T} + \tilde{T} S_0  )$ and $V_2 := \eta _2  r^{-1} M_h$.  Thus the complement $V_3$ is a compactly supported potential, whose support does not contain  the singularity.

 Recall that $\TTT_1$ is the operator defined in \eqref{deft} with $t=1$. We write the operator $\TTT_1$ as a sum of two operators, namely $\TTT_1 =: K +L $, where
\begin{equation}\label{defbc}\begin{split}  K:= \frac{1}{2} \tau   + V_2 + (\abs{\nabla f}^2 -a^2), \quad  L:= \frac{1}{2} \TTT + V_3 +a^2. \\ 
\\ \end{split} \end{equation}

Let us denote by $\TTT^{\FF}$ (resp. $\TTT_1^{\FF}$) the Friedrichs extension of  $\TTT \restriction _{ C _0 ^{\infty} ( \RRR _+ , \Omega ^{*-1} (L) \oplus \Omega ^{*} (L))}$ (resp. $\TTT_1  \restriction _{  C _0 ^{\infty} ( \RRR _+ , \Omega ^{*-1} (L) \oplus \Omega ^{*} (L))}$).

\begin{lemma}\label{lemma2} \begin{itemize} \item[(a)] We have $K \geq - \frac{3}{2} \epsilon (\delta)$ on $C _0 ^{\infty} ( \RRR _+ , \Omega ^{*-1} (L) \oplus \Omega ^{*} (L))$. 
\item[(b)]  The operator $L$ in \eqref{defbc} with $\dom (L) = \dom (\TTT ^  {\FF})$ is bounded from below and  $\spec_{\ess} (L )\subset \left[ a^2 , \infty \right ) .$ 
\item[(c)] We have $\spec_{\ess} \left( \TTT _1 ^{\FF} \right )\subset  \left[ \frac{a^2}{2} , \infty \right ) $. \end{itemize}\end{lemma}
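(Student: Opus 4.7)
\medskip

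\noindent\textbf{Proof plan.} Parts (a) and (c) will be essentially pointwise quadratic-form estimates built from Lemma \ref{lemma1} and the bound \eqref{MC2}, while (b) will reduce to a standard relative-compactness argument plus the scaling invariance of the model cone Laplacian.

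For (a), I will simply evaluate $\langle Ku,u\rangle$ term-by-term on a $u$ in the core $C_0^{\infty}(\RRR_+,\Omega^{*-1}(L)\oplus\Omega^*(L))$. The estimate in Lemma \ref{lemma1}(b) gives $\langle \tfrac12\tau u,u\rangle\geq -\tfrac12\epsilon(\delta)\|u\|^2$; the bound \eqref{MC2} gives $|\langle V_2 u,u\rangle|\leq\epsilon(\delta)\|u\|^2$; and \eqref{fc} gives $|\nabla f|^2-a^2\geq 0$. Adding these three inequalities yields $\langle Ku,u\rangle\geq -\tfrac32\epsilon(\delta)\|u\|^2$, which is exactly (a).

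For (b), I will use that $V_3=r^{-1}M_h-V_1-V_2$ is, by construction, a bounded multiplication operator whose support is a compact subset of $\cone(L)\setminus\{0\}$ lying in the smooth part of the cone. Hence $V_3$ is relatively compact with respect to $\TTT^{\FF}$ (the support meets only finitely many link scales and the restriction of the form domain of $\TTT^{\FF}$ to that compact set embeds compactly into $\LLL^2$ by Rellich). Consequently $L=\tfrac12\TTT^{\FF}+V_3+a^2$ is a self-adjoint extension (with $\dom(L)=\dom(\TTT^{\FF})$), is bounded below because $V_3$ is bounded and $\TTT^{\FF}\geq 0$, and by Weyl's essential-spectrum theorem
\begin{equation*}
\spec_{\ess}(L)=\spec_{\ess}\bigl(\tfrac12\TTT^{\FF}\bigr)+a^2.
\end{equation*}
The scaling $R_t\TTT R_t^{-1}=t^{-2}\TTT$ (a particular case of Lemma \ref{lemrescaling} at $h=0$) forces $\spec(\TTT^{\FF})$ to be scale invariant and in $[0,\infty)$, hence $\spec_{\ess}(\TTT^{\FF})\subset[0,\infty)$, which gives $\spec_{\ess}(L)\subset[a^2,\infty)$.

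For (c), the identity $K+L=\tfrac12\tau+\tfrac12\TTT+V_1+V_2+V_3+|\nabla f|^2=\TTT+r^{-1}M_h+T^2=\TTT_1$ holds on the core, so by (a)
\begin{equation*}
\langle \TTT_1 u,u\rangle=\langle Ku,u\rangle+\langle Lu,u\rangle\geq \langle Lu,u\rangle-\tfrac32\epsilon(\delta)\|u\|^2
\end{equation*}
for all $u$ in the core, and in particular the quadratic form of $\TTT_1^{\FF}$ dominates that of $L-\tfrac32\epsilon(\delta)$. The min-max principle then yields, for each $n$, $\mu_n(\TTT_1^{\FF})\geq\mu_n(L)-\tfrac32\epsilon(\delta)$, and passing to the bottom of the essential spectrum and invoking (b) gives $\inf\spec_{\ess}(\TTT_1^{\FF})\geq a^2-\tfrac32\epsilon(\delta)$. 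Choosing $\delta$ small enough that $\tfrac32\epsilon(\delta)\leq a^2/2$ yields (c). The only genuinely delicate step is the relative compactness of $V_3$ in (b); once that is in place, (a) and (c) are short form-domination computations.
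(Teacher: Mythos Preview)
Your proof is correct and follows essentially the same route as the paper: part (a) by summing the form estimates \eqref{tau}, \eqref{MC2}, \eqref{fc}; part (b) by relative compactness of $V_3$ with respect to $\TTT^{\FF}$ and Weyl's theorem; part (c) by the decomposition $\TTT_1 = K + L$ and min-max. Two small remarks: in your displayed identity for $K+L$ the intermediate expression has a spurious $V_1$ (the term $V_1$ is already contained in $\tfrac12\tau = \tfrac12\TTT + V_1$, so $K+L = \tfrac12\tau + \tfrac12\TTT + V_2 + V_3 + |\nabla f|^2$); and in (c) the min-max comparison between $\TTT_1^{\FF}$ and $L$ requires knowing that the form domain of $\TTT_1^{\FF}$ is contained in (in fact equals) that of $\TTT^{\FF}$, which the paper establishes separately via the analogue of Corollary~\ref{corstokes} --- you use this implicitly but do not state it. Your scaling argument in (b) is harmless but unnecessary, since $\TTT^{\FF}\geq 0$ already gives $\spec(\TTT^{\FF})\subset[0,\infty)$ directly.
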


Proof: (a)  The claim follows  using \eqref{tau}, \eqref{MC2} and \eqref{fc}. (b) The first claim is obvious from the definition of $L$, the fact that $V_3$ has compact support and $\TTT ^{\FF}\geq 0$.
We show next that, since $V_3$ is a continuous compactly supported potential, it is relatively compact with respect to $\left ( \frac{1}{2}\TTT ^{\FF} +a^2  \right ) $. We have to show that   every sequence $\phi _n$ such that $\norm{\left ( \frac{1}{2}\TTT ^{\FF} +a^2  \right ) \phi _n}^2 + \norm{\phi_n}^2 $  is bounded has a subsequence $\phi_{n_k}$ such that $V_3 \phi_{n_k}$ is convergent (in $\LLL^2$). From the boundedness of $\norm{\left ( \frac{1}{2}\TTT ^{\FF} +a^2  \right ) \phi _n}^2 + \norm{\phi_n}^2 $ one gets the boundedness of $\norm{\phi _n}_{H^1(\Omega)}$  for every bounded domain $\Omega$, $\supp (V_3) \subset \mathrm{int}(\Omega)$, $0 \not \in \Omega$. By Rellich's compactness theorem there exists a subsequence $\phi _{n_k}$ such that $\phi_{{n_k}}\restriction _{ \Omega}$ is convergent in $\LLL^2 (\Omega)$. This implies the $\LLL^2$-convergence of $V_3\phi_{{n_k}}$.


We can now apply Weyl's theorem (see e.g. \cite{reed}, XIII.4 Corollary 2) and get \begin{equation} \spec_{\ess} (L) = \spec_{\ess} \left ( \frac{1}{2} \TTT ^{\FF}  + a^2 \right ) \subset \left [a^2, \infty \right) . \end{equation} (c) Note first that, similarly to Corollary \ref{corstokes} one can show that  the form domains of $\TTT^ {\FF}$ and  $\TTT _1 ^{\FF}$ coincide. From (a) and (b) we deduce using the min-max principle (for the associated quadratic forms, see e.g. \cite{reed} Theorem XIII.2) that $\spec _{\ess} \left ( \TTT_1 ^{\FF} \right ) \subset \left[ a^2 - \frac{3}{2} \epsilon (\delta ) , \infty \right) $ and we get the claim by choosing $\epsilon (\delta) $ small enough. \hfill $\Box$

\

With the tools provided in Lemma \ref{lemma1} and Lemma \ref{lemma2} we can prove the local spectral gap theorem in all degrees $k \not = \nu$.  To prove the result also in middle degree $\nu$, we will use a ``complex argument''.  We formulate the argument in full generality in the language of Hilbert complexes introduced in \cite{hilbert}: Let $({\mathcal{ C }}, d, \langle \ , \ \rangle ) $ be a Hilbert complex. We denote by $\Dcal _i $ (resp. $\Rcal _i $) the domain (resp. the range) of $d_i$. By definition of a Hilbert complex the operators $d_i$ are closed and densely defined operators in some Hilbert space $H_i$ and moreover $\Rcal _i \subset \Dcal _{i+1}$.  Thus we have a complex
\begin{equation}\label{hilbertN} 0 \rightarrow \Dcal _0 \xrightarrow {d_0}  \ldots \xrightarrow  {d_{N-1}} \Dcal _{N} \ra 0. \end{equation}

\begin{lemma}\label{lemmahilbert}
Let $({\mathcal{ C }}, d, \langle \ , \ \rangle ) $ be a Hilbert complex as in \eqref{hilbertN} and let $k \in \NNN$, $0 <k<N $. Let us denote by $\Delta $ the Laplacian associated to the Hilbert complex and  by $({\mathcal{ C } ^*}, d^*, \langle \ , \ \rangle ) $ the dual Hilbert complex.  Assume that $ 0 \not \in \spec _{\ess} ( \Delta^{(i)})$ for all $i \not = k $.  Then
\begin{itemize} \item[(a)]  $\range d_i$ and  $\range d _i ^*$ are closed for all $i$.
\item[(b)] If  $0  \in \spec _{\ess} \left ( \Delta ^{(k)} \right )$, then it  is an eigenvalue  of infinite multiplicity. \end{itemize} 
\end{lemma}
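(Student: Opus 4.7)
The approach rests on two standard facts about Hilbert complexes, both found in \cite{hilbert}. The first is the weak Hodge decomposition
\begin{equation*}
H_i \;=\; \HHH^i \,\oplus\, \overline{\range d_{i-1}} \,\oplus\, \overline{\range d_i^*}, \qquad \HHH^i := \ker \Delta^{(i)},
\end{equation*}
with respect to which the Laplacian $\Delta^{(i)}$ reduces to $0$ on $\HHH^i$, to $d_{i-1} d_{i-1}^*$ on $\overline{\range d_{i-1}}$, and to $d_i^* d_i$ on $\overline{\range d_i^*}$. The second is the classical intertwining for a closed densely defined operator $T$: the non-zero parts of $\spec(T^*T)$ and $\spec(TT^*)$ coincide with multiplicities, and in particular agree essentially at any non-zero point.

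For part (a), I would fix $j$ and reduce closedness of $\range d_j$ to a positive spectral gap of $d_j^* d_j$ at $0$ on $(\ker d_j)^{\perp} = \overline{\range d_j^*}$. If $j \neq k$, the gap follows directly from $0 \notin \spec_{\ess}(\Delta^{(j)})$ via the reduction $\Delta^{(j)}|_{\overline{\range d_j^*}} = d_j^* d_j$: on this subspace the restricted operator has trivial kernel, so being free of essential spectrum at $0$ forces its spectrum to lie in $[\epsilon, \infty)$ for some $\epsilon > 0$. For $j = k$ this reasoning is unavailable, so instead I would dualize: $\range d_k$ is closed iff $\range d_k^*$ is closed, iff $d_k d_k^*$ has a positive gap at $0$ on $\overline{\range d_k}$, and that gap now comes from $0 \notin \spec_{\ess}(\Delta^{(k+1)})$ via $\Delta^{(k+1)}|_{\overline{\range d_k}} = d_k d_k^*$.

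For part (b), once (a) is secured, all closures in the Hodge decomposition can be dropped, giving $H_k = \HHH^k \oplus \range d_{k-1} \oplus \range d_k^*$ and a corresponding orthogonal splitting of $\Delta^{(k)}$ into three pieces. By the intertwining, $\spec(\Delta^{(k)}|_{\range d_{k-1}}) \setminus \{0\} = \spec(d_{k-1}^* d_{k-1}|_{\overline{\range d_{k-1}^*}}) \setminus \{0\} \subset \spec(\Delta^{(k-1)})$, and similarly $\spec(\Delta^{(k)}|_{\range d_k^*}) \setminus \{0\} \subset \spec(\Delta^{(k+1)})$. Since neither $\spec_{\ess}(\Delta^{(k-1)})$ nor $\spec_{\ess}(\Delta^{(k+1)})$ reaches $0$, no essential spectrum of $\Delta^{(k)}$ at $0$ can originate in these two summands. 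Consequently, $0 \in \spec_{\ess}(\Delta^{(k)})$ forces $\dim \HHH^k = \infty$, so $0$ is an eigenvalue of $\Delta^{(k)}$ of infinite multiplicity.

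The main technical point I expect to require care is verifying that $\Delta^{(k)}$ genuinely reduces on each Hodge summand as a self-adjoint unbounded operator on that summand, rather than merely as a formal expression on a common core; this is a statement about the intrinsic domain of the Laplacian of a Hilbert complex, and I would dispatch it by appealing to the corresponding domain results of \cite{hilbert} rather than reproving them here.
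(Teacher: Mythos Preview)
Your argument is correct. The overall structure---reduce to a spectral gap on the non-harmonic Hodge summands---matches the paper, but the execution differs in both parts.

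For part (a), the paper proceeds by induction: it sets $A_0 = d_0$, deduces from $0 \notin \spec_{\ess}(A_0^*A_0) = \spec_{\ess}(\Delta^{(0)})$ that $\Rcal_0, \Rcal_0^*$ are closed, and then for $0 < i < k$ considers $A_i u = (d_{i-1}^* u, d_i u)$, so that $A_i^* A_i = \Delta^{(i)}$; closedness of $\range A_i^* = \Rcal_{i-1} \oplus \Rcal_i^*$ together with the inductive hypothesis and the closed range theorem then yields closedness of $\Rcal_i, \Rcal_i^*$. A dual downward induction from $i = N-1$ handles the range $k \le i \le N-1$. Your route is more direct: you treat each degree $j \neq k$ independently by restricting $\Delta^{(j)}$ to the reducing subspace $\overline{\range d_j^*}$, and you handle $j = k$ by passing to $\Delta^{(k+1)}$. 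The trade-off is that your argument leans on the fact that the Hodge summands genuinely reduce the self-adjoint Laplacian (which you correctly flag and defer to \cite{hilbert}), whereas the paper's inductive argument avoids that appeal and is more self-contained.

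For part (b), the paper also drops the closures after (a) and then shows directly, via a Cauchy--Schwarz estimate, that $\langle \Delta^{(k)} \alpha, \alpha \rangle \geq c \norm{\alpha}^2$ for $\alpha \in \range d_{k-1}$ (writing $\alpha = d_{k-1}\beta$ with $\beta$ coclosed and using $0 \notin \spec_{\ess}(\Delta^{(k-1)})$ on $(\HHH^{k-1})^\perp$), and dually for $\range d_k^*$. Your use of the intertwining $\spec(TT^*)\setminus\{0\} = \spec(T^*T)\setminus\{0\}$ reaches the same lower bound more abstractly. Both approaches are equivalent in strength here.
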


\begin{proof} 

(a)   We denote by $\Dcal _i ^ *$ (resp. $\Rcal _i^ *$) the domain (resp. the range) of $d_i^ *$. We will first show by an induction that $\Rcal _i, \Rcal _i^*$ are closed for $0 \leq i < k$. Let us  define the operator $A_0$ by 
\begin{equation} A_0 := d_0 : H _0 \longrightarrow  H _1, \  u_0 \mapsto d_0 u_0. \end{equation} The operator $A_0$ is a closed operator with adjoint $A_0^* = d_0^ *$. Since by assumption $0 \not \in  \spec _{\ess}( A_0^* A_0) = \spec_{\ess} ( \Delta^ {(0)}) $ we deduce that $\range (A_0^*A_0) = \range ( A_0^* )= \Rcal _0^ *$ is closed.  Then by the closed range theorem also $\range (A_0 ) = \Rcal _0$ is closed.

 Let $0<i < k$ and let us assume that  $\Rcal _j, \Rcal _j ^*$ are closed for all $j <i $. We define the closed operator $A_i$ by
\begin{equation}  A_i  : \Dcal _i \cap \Dcal_{i-1} ^* \subset   H_i \longrightarrow H_{i-1} \oplus H_{i+1} , \quad   u  \mapsto (d_{i-1} ^* u,  d_i u).  \end{equation} By assumption $0 \not \in \spec _{\ess} (A_i^* A_i) =\spec _{\ess} (\Delta ^{(i)}) $. Therefore $\range (A_i ^* A_i) = \range (A_i^*) = \Rcal _{i-1} \oplus \Rcal _i^*$ is closed. Using the  induction hypothesis and the closed range theorem we deduce that $\Rcal _i $ and $\Rcal _i^ *$ are closed. 

Dually one can now consider the operator $B_{N-1}  := d_{N-1} ^ * : \Dcal ^*_{N-1}  \rightarrow H_{N-1}  $ and get that $\Rcal _{N-1}$ and $\Rcal _{N-1} ^ *$ are closed. By a downward  induction one can show that $\Rcal _i$, $\Rcal _{i}^ *$ are closed for $k \leq i \leq N-1$. 

(b) For any Hilbert complex the weak Hodge decomposition holds. Using in addition  Part (a) of the lemma we get that
\begin{equation} H_k = \HHH _k \oplus \range (d_{k-1} )\oplus \range ( d^ * _{k} ) , \end{equation}
where $\HHH _k := \ker \Delta ^{(k)} = \ker d _k \cap \ker d^* _{k-1}$. To prove the claim it is enough to show that  $\Delta^ {(k)}  \restriction _{ \range ( d_{k-1} ) \oplus \range ( d^ * _{k} ) } \geq c$ for some constant $c >0$. This can be seen as follows: Let $0 \not = \alpha \in \range ( d_{k-1} ).$ Then $\alpha = d _{k-1} \beta$ for some $\beta \in H_{k-1}$. Without loss of generality we can assume that $\beta$ is coclosed. Then by the Cauchy-Schwarz inequality
\begin{equation}\label{1} \langle \Delta  \alpha , \alpha  \rangle = \langle \Delta d \beta, d \beta \rangle = \norm{d^ * d \beta } ^2 = \norm{\Delta \beta }^ 2 \geq \frac { \langle \Delta  \beta,  \beta \rangle ^ 2}{ \norm {\beta}^2 } . \end{equation}

Since $0 \not \in \spec _{ess} (\Delta ^ {(k-1)}) $ and since $\beta$ is orthogonal to $\HHH _{k-1}$ there exists a constant $c$ such that 
\begin{equation}\label{2} \langle \Delta  \beta , \beta \rangle \geq c \norm{\beta}^ 2 .  \end{equation} From \eqref{1} and \eqref{2} we then get
\begin{equation}  \langle \Delta \alpha  , \alpha \rangle \geq c  \langle \Delta \beta  , \beta \rangle = c \norm{d \beta} ^ 2 = c \norm{\alpha}^ 2 .  \end{equation}

\noindent A similar argument works for $\alpha \in \range (  d^ *_k )$. \end{proof}

\

{\it Proof of Theorem \ref{hodgelocal} b):} Note that by Theorem 2.4 in  \cite{hilbert} the Fredholm property of the complex $(\DD_t, d_t, \langle \ , \ \rangle)$  is equivalent to $0 \not \in \spec_{\ess} (\DDD_t) $. Once the Fredholm property is proved, the isomorphism in \eqref{hodgeisomlocal} follows from standard arguments on the Hodge theory of Hilbert complexes (see \cite{hilbert}, Corollary 2.5). 

Let us show first that $0 \not \in \spec_{\mathrm{\ess}} (\DDD_t^{(i)}) $ for $i \not = \nu$:  By the rescaling argument in Lemma \ref{lemrescaling} it is enough to prove $0 \not \in \spec_{\ess}  ( \DDD_1^{(i)}  )$. Since $U^{-1}\DDD_1^{(i)}U = \TTT _1^{(i)}$, this is equivalent to  proving that $0 \not \in  \spec_{\ess}   ( \TTT _1^{(i)}  ) .$  But for $i \not = \nu$ we have  $\TTT _1^ {(i)} = \TTT _1^ { {(i)}, \FF }$ (this can be seen as in Lemma \ref{vorbereitungstokes}).   The claim follows using Lemma \ref{lemma2} (c). 

We consider now the case $i = \nu$: Let us assume that  $0 \in \spec_{\ess} \left ( \DDD _t^{ ( \nu ) } \right ) $. By the  first part of the proof we know that the assumptions of Lemma \ref{lemmahilbert} hold for the Hilbert complex $ \big ( \DD _t , d_t, \langle \ , \ \rangle  \big )$ (and $k = \nu$).  Thus we conclude that $0$ is an eigenvalue of $ \DDD _t^{ ( \nu ) }$ of infinite multiplicity.  Let $\{\phi _n \}_{n \in \NNN}$ be a sequence of orthogonal eigenforms in $\ker  \DDD _t^{(\nu) }$. Denote by $P^{\nu}_t$ the Dirichlet realisation of the Witten Laplacian on the truncated cone $\cone _1(L)$. (More precisely for $P^{\nu} _t$, we impose Dirichlet boundary conditions at $r = 1$ and the conditions in \eqref{domainlocalwitten} locally near the singularity.)  This is a non-negative operator with finitely many eigenvalues in each finite interval. Let $\chi : \cone (L) \rightarrow [0,1]$ be a cut-off function with $\supp \chi \subset  \cone _{1/2}(L)$. Using  Agmon type estimates similar to those in \cite{helffer} one can prove  that for $t $ large enough we get
\begin{equation} \langle P_t^ {\nu}  (\chi \phi _n), \chi \phi _n \rangle   = O(e^{-ct}) \norm{ \chi \phi _n}^2.\end{equation}
Using the  min-max principle  this gives a contradiction to the fact, that $P_t ^ {\nu}$ has only  finitely many eigenvalues in the interval say e.g.~$[0,1]$. Therefore we have shown that $0 \not \in \spec _{\ess} (\DDD_t ^ {( \nu ) })$ for sufficiently large $t$. By rescaling we get $0 \not \in \spec _{\ess} ( \DDD_t^ {( \nu ) } )$ for   all $t >0$.

\hfill $\Box$

\subsection{Proof of Theorem \ref{hodgelocal} (c)} 
Note that $\DDD_1$ is a non-negative operator. The proof of part (b) showed that $0 \not \in \spec_{ess} (\DDD_1)$.  Thus there exists a constant $c>0$ such that $\spec (\DDD_1) \subset \{0\} \cup [c, \infty)$. By the rescaling argument in Lemma \ref{lemrescaling} we deduce that $\spec (\DDD_t) \subset \{0\} \cup [ct^2, \infty)$.  The claim on the decay of the eigenfunctions follows from Agmon type estimates similar to those in \cite{helffer}, (see also \cite{ulcurve} for Agmon type estimates for a singular curve). \hfill $\Box$

\subsection{Proof of Theorem \ref{hodgelocal} (d)} 

To  prove  Theorem \ref{hodgelocal} (d) we adapt the {\it cone construction} proposed  in \cite{braverman}, Section 5  (resp. \cite{farber}, Section 3) in the context   of the Witten deformation on manifolds with boundary (resp. of the Witten deformation for polynomial differential forms on  non-compact manifolds). The proof follows closely the lines of proof in \cite{braverman}, \cite{farber}, one has to take care that the arguments go through at the cone point. We give the arguments of the proof in some detail for convenience of the reader.

Denote by $(\EEE^*, d, \langle \ , \ \rangle )$ the complex defined by
\begin{equation}\EEE^i := \{ \omega \in \Omega ^*(\cone (L) \setminus \{p\}) \mid \omega , d \omega \in \LLL^2 \text{ loc. at } p \}. \end{equation}

\noindent Recall that $\dim \cone (L) = 2 \nu$. It is not difficult to see (compare the  remark below) that the complex $(\EEE^*, d, \langle \ , \ \rangle)$ computes the $\LLL^2$-cohomology of the punctured cone, \begin{equation}\label{localcalc} H^*(\EEE^*, d, \langle \ , \ \rangle) \simeq H^*_{(2)} (\cone (L)) = \left \{ \begin{array}{ll} H^i _{(2)} (L)= H^i(L) &  i \leq \nu -1, \\ 0 & \text{ else}.\end{array} \right .\end{equation}

\noindent For cones the integration morphism gives a duality between  $\LLL^2$-cohomology and intersection homology with middle perversity. Thus 
 \begin{equation}\label{dual} H^*(\EEE^*, d, \langle \ , \ \rangle) \simeq IH^*(\cone (L)).\end{equation}

For $c >0$ let  $j _{c} :U^-_{c} \subset \cone (L) \setminus \{p\}$ be the open subset of $\cone (L)$ defined by
\begin{equation} U^-_{c} := \{ (r, \varphi ) \in \cone (L) \mid f(r, \varphi) < - c\}.\end{equation} Note that for $c' > c$ we have $U_{c'}^- \subset U_c ^-$. Moreover
\begin{equation}\label{defretract} \mathrm{int} (l^-) \subset U_{c'} ^- \subset U_c^-\end{equation}
are deformation retracts. We recall that $ l^-$ is the lower halflink defined in Section 3.2 (choosing $\delta > c' >c$ in the construction there) and  we denote by  $\mathrm{int } (l^-):= l^- \setminus \partial l^-$. 
We denote by $(\Omega ^* (U_{c}^-),d)$ the de Rham complex of smooth forms on $U^-_{c}$. Using the de Rham isomorphism between singular cohomology and de Rham cohomology as well as the homotopy invariance of singular cohomology we get
\begin{equation} H^*(\Omega ^*(U_{c}^-), d) = H^*_{dR} (U^-_{c}) \simeq H^* _{\sing} (U_{c}^-)\simeq H^*_{\sing}(l^-).\end{equation}

\noindent Since $l ^- $ is smooth we have moreover that $H^* _{\sing} (l^-) \simeq IH^*(l^-)$ and therefore 
\begin{equation}\label{dual2} H^*(\Omega ^*(U_{c}^-), d)  \simeq IH^*(l^-).\end{equation}

We denote by $j_c^*$ the restriction map
\begin{equation} j_c^* : \EEE ^i \rightarrow \Omega ^i (U^-_c) , \  \omega \mapsto \omega _{\mid U_c^-}. \end{equation} The cone complex $\Cone (j_c^*)$ is defined as follows (see e.g. \cite{dold}) \begin{equation} \Cone ^k (j_c^*) = \EEE ^k \oplus \Omega ^{k-1} (U^-_c), \ d _{\Cone} (\eta, \eta ') = (d \eta, - d \eta '  + j_c^* \eta) .\end{equation} By construction of the cone complex and in view of \eqref{dual} and \eqref{dual2} we get
\begin{equation}\label{cohomologyofcone}  H^* (\Cone (j^*_c)) \simeq IH^* (\cone (L), l^-).\end{equation}

\noindent Note that because of \eqref{defretract} the isomorphism \eqref{cohomologyofcone} holds for all $c >0$.

\begin{remark}{\rm Some remarks may be in order here:\begin{itemize} \item[(1)] While all forms in the complex  $(\DD_t, d_t, \langle \ , \ \rangle)$ are $\LLL^2$-integrable on the infinite cone, for the complex $(\EEE, d, \langle \ , \ \rangle )$ we allow forms which are not necessarily $\LLL^2$-integrable at $\infty$.
\item[(2)] In \cite{cheeger2} the complex of $\LLL^2$-forms on the truncated cone $\cone _1(L)$ is used to compute the $\LLL^2$-cohomology of the punctured cone. Here we use the complex  $(\EEE, d, \langle \ , \ \rangle )$ on the infinite cone instead. However to prove \eqref{localcalc} one can proceed as in \cite{cheeger2} and construct homotopy operators which contract to the tip point (resp. to the base $L$ of the cone) if the form degree is $i \geq  \nu $ (resp. $i < \nu $). 
\end{itemize}}
\end{remark}

Let $( \DD_t^{\infty}  , d_t, \langle \ , \ \rangle)$ be the subcomplex of $( \DD_t , d_t, \langle \ , \ \rangle)$ of smooth forms, {\it i.e.} $\DD _t^{\infty, i } := \DD _t^i \cap \Omega ^i (\cone (L) \setminus \{p\})$. By the usual regularisation argument \begin{equation} \label{regularisation} H^*  ( \DD_t^{\infty}  , d_t, \langle \ , \ \rangle) \simeq H^*( \DD_t , d_t, \langle \ , \ \rangle).\end{equation} Let us recall that a form $\omega \in \DD_t ^{\infty} $ can be decomposed as \begin{equation} \omega= \omega _{\perp} + dr \wedge \omega _{||},\end{equation} where $\omega _{\perp}$ and $ \omega _{||}$ are forms on the link depending smoothly on $r$.

\begin{deflem} We can define the following  chain map  \label{defchainmap}
\begin{equation}\label{chainmap} \begin{array}{cclcl} \Phi &: &( \DD_t^{\infty} , d_t, \langle \ , \ \rangle) & \longrightarrow & \left (\Cone (j_c^*), d_{\Cone} \right ) \\
& &\omega= \omega _{\perp} + dr \wedge \omega _{||} & \mapsto & (e^{tf}\omega ,  \omega') , \end{array}\end{equation}
where \begin{equation}\label{xidef} \omega ' (r, \varphi)= - \int _r^{\infty} e^{tf(\tau , \varphi)} \omega_{||} (\tau, \varphi ) d \tau \text{ for } (r,\varphi) \in U_c^-. \end{equation}
\end{deflem}
\begin{proof} Note first that since $f < -c$ on $ U_c^-$ and $\omega \in \DD_t^{\infty} $ the integral in \eqref{xidef}  converges. Moreover to check that $\Phi$ is a chain map one has to verify that, on $U_c^-$,
\begin{equation}\label{chain} - \int _r^{\infty} e^{tf} ( d_t \omega) _{||} d \tau =  d \left ( \int _r^{\infty} e^{tf} \omega _{||} d \tau  \right ) + e^{tf} \omega .\end{equation}

As in Section 3.3, let us denote by $\tilde{d}$ the operator on the link $L$. The equation \eqref{chain} follows from the following computation \begin{equation}\begin{split}  - \int _r^{\infty} e^{tf} ( d_t \omega) _{||} d \tau&  =  - \int _r^{\infty} e^{tf} ( d \omega + t d f \wedge \omega ) _{||} d \tau  \\ 
& = - \int _r^{\infty} e^{tf} \left (  \frac{\partial \omega _{\perp}}{\partial \tau } - \tilde{d}\omega _{||} - t \tau d h \wedge \omega _ {||} + t h \omega _{\perp} \right) d \tau \\
& = - \int _{r}^{\infty} \frac{\partial}{\partial \tau } (e^{tf} \omega _{\perp} ) d \tau  + \int _r^{\infty}  \tilde{d}\left ( e^{tf}  \omega _{||} \right ) d \tau  \\
&=  e^{tf} \omega _{\perp} + dr \wedge e^{tf} \omega _{||} + d \left ( \int _r^{\infty} e^{tf} \omega _{||} d \tau  \right ) . \\
  \end{split}\end{equation}

\end{proof}

In view of \eqref{cohomologyofcone} and \eqref{regularisation} in order to prove Theorem \ref{hodgelocal}(d) it is enough to prove the following proposition:

\begin{prop} The chain map $\Phi$ defined in \eqref{chainmap} is a quasi-isomorphism. \end{prop}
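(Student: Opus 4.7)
My plan is to adapt the strategy of \cite{braverman}, Section 5, and \cite{farber}, Section 3, to the present conic setting by constructing an explicit chain homotopy inverse $\Psi : (\Cone(j_c^*), d_{\Cone}) \to (\DD_t^{\infty}, d_t)$, and then verifying that $\Phi \circ \Psi$ and $\Psi \circ \Phi$ are each chain homotopic to the identity.

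First I fix constants $0 < c < c' < \delta$ and a smooth cutoff $\rho$ on $\cone(L)$ depending only on $f$, with $\rho \equiv 1$ on $\{f \geq -c\}$ and $\rho \equiv 0$ on $U_{c'}^-$, and set
\begin{equation*}
\Psi(\eta, \eta') := \rho\, e^{-tf} \eta + d\rho \wedge e^{-tf} \eta' .
\end{equation*}
Using the intertwining identity $d_t(e^{-tf}\alpha) = e^{-tf} d\alpha$, a direct calculation yields $d_t \Psi(\eta,\eta') = \rho e^{-tf} d\eta + d\rho \wedge e^{-tf}(\eta - d\eta')$, which matches $\Psi(d_{\Cone}(\eta, \eta'))$ after using $j_c^* \eta = \eta$ on $\supp d\rho \subset U_c^-$; hence $\Psi$ is a chain map.

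Next I check that $\Psi$ takes values in $\DD_t^{\infty}$. Near the cone point $p$, the factor $e^{-tf}$ is bounded ($f$ is continuous with $f(p)=0$) and $\eta$ is $\LLL^2$ locally at $p$ by definition of $\EEE^*$, so $\Psi(\eta,\eta')$ is $\LLL^2$ near $p$. For $r$ large, $\supp \rho \subset \{f \geq -c'\}$ combined with the local form $f = rh + O(r^{1+\delta})$ and $|\nabla f|^2 \geq a^2$ forces $h \geq 0$ along the relevant directions, so $e^{-tf}$ decays exponentially in $r$ and $\LLL^2$ integrability at infinity is guaranteed for $\eta$ of polynomial growth. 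The correction $d\rho \wedge e^{-tf} \eta'$ is compactly supported in $U_c^-$, where $\eta'$ is smooth.

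Finally, to conclude that $\Phi_*$ is an isomorphism, I would construct chain homotopies trivialising $\Phi \circ \Psi - \mathrm{id}$ and $\Psi \circ \Phi - \mathrm{id}$. For $\Phi \circ \Psi - \mathrm{id}$, the discrepancy is concentrated in $U_c^-$ and in the transition region $\{-c' \leq f \leq -c\}$; the homotopy is built from the fibrewise integration $\omega_{||} \mapsto \int_r^{\infty} e^{tf(\tau, \varphi)} \omega_{||}(\tau, \varphi)\, d\tau$ used in the very definition of $\Phi$, together with the deformation retract $\operatorname{int}(l^-) \hookrightarrow U_c^-$ from \eqref{defretract}. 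For $\Psi \circ \Phi - \mathrm{id}$, the homotopy is obtained by integration along the gradient flow of $-f$, following \cite{braverman}, Section 5. The main obstacle is ensuring that these homotopies respect the $\LLL^2$ structure at the cone point; this is where the admissibility condition $|\nabla f|^2 \geq a^2$ in a neighbourhood of $p$ is indispensable, as it rules out critical points of $f$ accumulating at $p$, keeps gradient flow lines transverse to small spheres around $p$, and underlies Agmon-type estimates in the spirit of Theorem \ref{hodgelocal}(c) needed to bound remainder terms. Modulo this careful bookkeeping, the argument of \cite{braverman, farber} carries over mutatis mutandis.
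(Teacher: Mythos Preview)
Your proposed homotopy inverse $\Psi$ is not well-defined as a map into $\DD_t^{\infty}$, and this is not a matter of bookkeeping but a genuine obstruction. The issue is $\LLL^2$-integrability at infinity near the zero set of $h$ on the link. The condition $|\nabla f|^2 = h^2 + |\widetilde{\nabla}h|^2 \geq a^2$ does \emph{not} force $h \geq 0$; it only says $h$ and $\widetilde{\nabla}h$ cannot vanish simultaneously, so $\{h=0\}\subset L$ is generically a smooth hypersurface. For $\varphi$ with $h(\varphi)>0$ small, one has $f(r,\varphi)=rh(\varphi)\geq 0\geq -c$, so $\rho\equiv 1$ there, and
\[
\int_0^{\infty} e^{-2trh(\varphi)} r^{n}\,dr \;=\; \frac{C}{(th(\varphi))^{n+1}},
\]
which blows up like $h^{-(n+1)}$ as $\varphi$ approaches $\{h=0\}$ and is not integrable over $L$. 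Hence already for $\eta=1\in\EEE^0$ one gets $\Psi(\eta,0)=\rho\,e^{-tf}\notin\LLL^2(\cone(L))$. Since $\EEE^*$ carries no growth condition at infinity, no cutoff depending only on $f$ can repair this without also cutting near the cone tip.

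This is exactly why the paper does \emph{not} attempt a chain-level inverse. Instead it argues injectivity and surjectivity of $\Phi^*$ separately, each time choosing representatives with controlled behaviour at infinity. For injectivity it picks $\omega\in\ker\DDD_t$ (hence exponentially decaying by part~(c)) and, through a sequence of modifications supported away from the tip, produces a primitive $\zeta$ with the correct decay on the regions $S_{\epsilon}\times[2,\infty)$ and $(L\setminus S_{\epsilon})\times[2,\infty)$ separately; the splitting along $S_{\epsilon}=\{h<\epsilon/t\}$ is precisely what handles the problematic set $\{h\approx 0\}$. For surjectivity it first replaces an arbitrary cocycle $(\alpha,\alpha_1)$ by a cohomologous $(\gamma,0)$ with $\gamma$ bounded and vanishing on $S_{\epsilon}\times[3,\infty)$, so that on the remaining region $e^{-tf}\leq e^{-\epsilon r}$ gives the needed decay. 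Neither step can be packaged as a globally defined chain map on $\Cone(j_c^*)$.
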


\begin{proof} To prove the proposition we have just to check that the arguments in  \cite{braverman}, \cite{farber} adapt to our singular situation.  We denote by $\Phi^*: H^*(\DD_t^{\infty} , d_t, \langle \ , \ \rangle ) \rightarrow H^* (\Cone (j_c^*), d_{\Cone})$ the induced map on cohomology level. We prove first the injectivity of $\Phi^*$: 

Let $[\omega] \in H^*(\DD_t^{\infty} , d_t, \langle \ , \ \rangle )$ with $\Phi ^*[\omega] =0$. In view of the Hodge theory for the complex  $(\DD_t , d_t, \langle \ , \ \rangle)$ (part (b) of Theorem \ref{hodgelocal})  and elliptic regularity we can  choose a representative $\omega \in \ker \DDD _t$ for the class $[\omega]$. The fact that $\Phi ^*[\omega] =0$ means that there exists $(\alpha , \alpha ') \in \EEE^{i-1} \oplus \Omega ^{i-2}(U_c^-) $ such that \begin{subequations}\begin{align} & d \alpha = e^{tf} \omega  , \label{alpha1}\\ & -d\alpha ' + j_c^* \alpha = \omega '. \label{alpha2} \end{align} \end{subequations}

To show injectivity of $\Phi^*$ we have to find $\zeta \in \DD_t^{\infty, i-1} = \dom (d_{t,i-1}) $ with $d_t \zeta  = \omega$. This will be done by modifying the form $\alpha$ as in \cite{braverman}. The main point is to notice that  the proof also works in our situation since all the modifications are done  outside a small neighbourhood of the cone tip. 

\underline{\it Step 1:} Let $c<c'$. Let $\chi _1: \cone (L) \rightarrow [0,1]$ be a cutoff function such that $\chi _ {1 |U_{c'}^-} = 1 $  and $ \chi _{1 \mid \cone(L) \setminus U_c^-}=0$. 

Set  \begin{equation}\label{defbeta} \beta := \alpha - d ( \chi _1 \alpha ') .\end{equation}

Then we get \begin{subequations}\begin{align} & d \beta = e^{tf} \omega  \text{ directly from  the definition of } \beta \text{ and } \eqref{alpha1} , \label{beta1} \\ & \beta _{\mid U_{c'}^-} = \omega ' \text{ from the definition of }  \beta \text{ and }  \eqref{alpha2}. \label{beta2}\end{align}\end{subequations}

\underline{\it Step 2:} Let $\chi _2: \cone (L) \rightarrow [0,1]$ be a cutoff function  such that $\chi _2 = 1 $ for $r\geq 2 $ and $\chi _ 2 =0$ for $r \leq 1$. 
Set \begin{equation}\label{gammadef} \gamma := \beta - d \left( \chi _2 \int _{2} ^r \beta _{||} d\tau \right ) .\end{equation}

Then we get by using   \eqref{beta1} and \eqref{gammadef} that \begin{subequations}\begin{align} & d \gamma = e^{tf} \omega , \label{gamma1} \\ 
& \gamma _{||} = 0 \text{ on } r \geq 2  . \label{gamma3}  \end{align}\end{subequations}

From \eqref{gamma1}  and \eqref{gamma3} we get in particular that  $d \gamma _{\perp} = e^{tf} \omega $ and therefore \begin{equation}\label{diffequation} \frac{\partial }{\partial r } \gamma _{\perp} = e^{tf} \omega _{||} \text{ for } r \geq 2 . \end{equation}

\underline{\it Step 3:} For $\epsilon \in \RRR$ set  $S_{\epsilon } := \left \{ \varphi \in L \mid h( \varphi ) < \frac {\epsilon}{t}  \right \}$.  From Proposition 3.1. (c) we know that all forms  in $\ker \DDD _t$ have exponential decay, thus there exists constants $a>0$, $C>0$ such that  \begin{equation} \label{agmon} \omega \leq Ce^{-ar} \end{equation} outside a small neighbourhood of the singularity. Hence \begin{equation}\label{decay15}  e^{tf} \omega \leq C e^{-ar + \epsilon r } \text{ if } (r,\varphi ) \in S_{\epsilon} \times [2, \infty). \end{equation}

Therefore the integral $\int _{r}^{\infty} e^{tf} \omega _{||} d \tau $ is well defined and the differential equation \eqref{diffequation} on $S_{a} \times [2, \infty)$ is solved by 
\begin{equation}\label{gammaperp} \gamma _{\perp} = \xi - \int _{r}^{\infty} e^{tf} \omega _{||}d \tau,
 \end{equation}
where $\xi$ is a differential form on $S_{a}$ (which does not depend on the radial coordinate $r$).

From \eqref{gamma1} and  \eqref{gammaperp} we get that 
\begin{equation}\label{etaclosed1} e^{tf} \omega _{\perp} = \tilde d  \gamma_{\perp}   = \tilde d  \xi -  \int _{r}^{\infty}  \tilde d (e^{tf} \omega _{||} ) d \tau ,   \text{ on }  S_{a} \times [2, \infty). \end{equation}

Now since $d_t \omega = 0$ we have \begin{equation}0 =  (d_t \omega)_{||}  = e^{-tf} \left ( -\tilde d (e^{tf} \omega _{||} ) + \frac{\partial}{\partial r} (e^{tf} \omega _ {\perp} )  \right )  , \end{equation}
and thus 
\begin{equation}\label{etaclosed2} -\tilde d (e^{tf} \omega _{||} ) + \frac{\partial}{\partial r} (e^{tf} \omega _ {\perp}) = 0. \end{equation}

Inserting \eqref{etaclosed2} into \eqref{etaclosed1} we get that (on $S_{a} \times [2, \infty)$) \begin{equation}\label{etaclosed3} e^{tf} \omega _{\perp} =  \tilde{d}\xi + e^{tf} \omega _{\perp} \end{equation}
and thus $\tilde{d}\xi = 0$, {\it i.e.} $\xi$ is a closed form on $S_{a}$.

An explicit computation using  \eqref{beta2}, \eqref{gammadef} and \eqref{gammaperp} shows that \begin{equation*} \xi = -d (\left ( \int _{2} ^{\infty} e^{tf} \beta _{||} \right ) \text{ on } U_{c'}^- .\end{equation*} Note that for $c'$ large enough $U_{c'} ^ - \subset \{ r \geq 2\}$ and $\xi$ does not depend on the radial coordinate $r$. Moreover for each $\varphi  \in S_0$ there exists $r_{\varphi}$ such that $(r_{\varphi}, \varphi ) \in U_{c'}^-$ we conclude that $\xi _{ \mid S_0} $ is exact. Thus $[\xi ] = 0 \in H^*(S_0)$. Since by definition of an admissible Morse function (see also \eqref{fc}) $0$ is a regular value of $h$ we can find $\epsilon \in (0, a/2)$ such that $H^*(S_0) \simeq H^*(S_{2 \epsilon})$. Thus $[\xi ] = 0 \in  H^*(S_{2\epsilon})$, {\it i.e.} there exists a form $\mu \in \Omega ^*(S_{2\epsilon})$ with $d \mu = \xi$. Let $\chi _3 : L \rightarrow [0,1]$ be a cut-off function with $\chi _{3  \mid S_{4/3 \epsilon}}  \equiv 1 $ and $\chi_{3  \mid L \setminus S_{5/3 \epsilon}}   \equiv 0$. Set 
\begin{equation}\label{thetadef} \theta := \gamma - d( \chi _3 \chi _2 \mu). \end{equation}

We have \begin{subequations}\begin{align} & d \theta = d \gamma = e^{tf} \omega \label{theta1}, \\
 & \theta _{\mid S_{\epsilon} \times [2, \infty)} =  \gamma _{\perp} - d \mu = - \int _{r} ^ {\infty} e^{tf} \omega _{||} d \tau \text{ by using } \eqref{gammaperp} ,\label{theta2} \\ & \theta _{||} = 0 \text{ on } r > 2 \text{ by the definition of } \theta \text{ and } \eqref{gamma3}. \label{theta3}
 \end{align}  \end{subequations}

Set $\zeta := e^{-tf} \theta$. Then $d_t \zeta = \omega$. The proof of the injectivity of $\Phi ^*$ is completed if we can show that $\zeta \in \DD _t ^{\infty} = \dom (d_t)$. Since $d_t \zeta = \omega \in \LLL ^2 $ we need to show only that   $\zeta \in \LLL^2 (\Lambda ^*(T^*(\cone (L)))$. Note first that since  the cut-off functions $\chi_1$, $\chi_2$ used to construct $\theta$ from $\alpha$ have support outside the cone point and $\alpha \in \LLL^2$ locally near the cone point we have $\zeta _{\mid \cone _{2 } (L)} \in \LLL^2$. One uses \eqref{decay15} and \eqref{theta2} to show that $\zeta_{\mid S_{\epsilon } \times [2, \infty )}$ has exponential decay for $r \ra \infty$ and thus $\zeta _{\mid S_{\epsilon } \times [2, \infty )} \in \LLL^2$.   On $ ( L \setminus S_{\epsilon } ) \times [2, \infty)$ we use \eqref{theta1} and \eqref{theta3} 
and thus
\begin{equation}\label{thetaperp} \frac{\partial}{\partial r} \theta _{\perp} = e^{tf} \omega _{||} .\end{equation}  

By integrating \eqref{thetaperp} one gets 
\begin{equation}\label{thetaperp2} \theta _{\perp} = \theta _{\perp } ( \varphi, 2) + \int _{2} ^ r e^{tf} \omega _{||} d \tau .\end{equation}

Again using \eqref{agmon} and \eqref{thetaperp2} one gets  that $\zeta _{\mid ( L \setminus S_{\epsilon } ) \times [2, \infty)} $ is exponentially decaying and thus $\zeta _{\mid ( L \setminus S_{\epsilon } ) \times [2, \infty)} \in \LLL^2$. This finishes the proof of the  injectivity of $\Phi^*$.

\ 

We now show the surjectivity of $\Phi^*$: We can choose $c >0$ large enough such that $U_c ^- \subset S_{\epsilon} \times [3, \infty).$ Moreover $U_c^{-} $ is a deformation retract of $S_{\epsilon} \times [3, \infty).$ As in \cite{braverman} one can show that a class $[(\alpha, \alpha _1)] \in H^i (\Cone (j_c^*)) $ can be represented by a pair $(\gamma , 0 ) \in \EEE ^i \oplus \Omega ^{i-1} (U_c^-)$ such that $d \gamma = 0$, $\gamma \restriction _{S_{\epsilon} \times [3, \infty)} = 0$ and $\gamma $ is bounded. We then have $e^{-tf} \gamma \in \LLL^2$: Since $\gamma \in \EEE ^i $ in particular $e^{-tf} \gamma \restriction_{\cone _3(L)} \in \LLL^2$. Moreover $\gamma \restriction _{S_{\epsilon} \times [3, \infty)} = 0$ and on $(L \setminus S_{\epsilon}) \times [3, \infty)$ we have $e^{-tf} < e^{-t \epsilon r} $. Since $\gamma $ is bounded we deduce that $e^ {-tf} \gamma \restriction _{(L \setminus S_{\epsilon}) \times [3, \infty)} \in \LLL^2$. We conclude that $e^{-tf } \gamma \in \DD^{\infty , i}  _t $ and moreover $\Phi ^*( [ e^{-tf} \gamma ] ) = [(\gamma, 0)]=[(\alpha, \alpha _1)]$.

\end{proof}

\begin{example}{\rm Let us illustrate the result in Theorem \ref{hodgelocal} for the case $f = \pm r.$  In this case we have separation of variables for the model Witten Laplacian $\DDD _t$ and the lower halflink is easy to determine. Therefore we can compute both sides of the isomorphism
\begin{equation}\label{isomthm} \ker (\DDD_t^{(i)}) \simeq IH^i(\cone (L), l^-)\end{equation} 
explicitely. In the following we denote by $\HHH ^i(L)$  the harmonic $i$-forms on the link $L$. 
\begin{itemize} \item[(a)] Let $f = r$. We get, with the notations as in Section \ref{sectionmodel} (for the first order operator acting on $i$-forms)

\begin{equation} U^{-1}\D _t U=  \pm \partial _r + r^{-1} \left ( \begin{array}{cc} c_{i-1} & \widetilde \delta \\ \widetilde d & c_{i} \end{array}  \right ) + t \left ( \begin{array}{cc}  (-1) ^{i-1} & 0 \\ 0 & (-1) ^{i} \end{array}  \right )\end{equation} and an explicit computation shows that 
 \begin{equation}\label{kernel+} \ker \DDD_t ^{(i)}= V^i _+\end{equation}
where 
 \begin{equation} V ^i _+:= \left \{ \begin{array}{ll} \mathrm{span} \{ e^{-tr} \eta \mid \eta \in \HHH ^i (L) \}  &  \text{ if } i < \nu ,\\
 0 & \text{ if } i \geq \nu .\end{array}  \right .  \end{equation} 
 Note that for $i < \nu$ the forms in $V^i_+$ are locally $\LLL^2$ near the cone point, the factor $e^{-tr}$ ensures that they are $\LLL^2$ at $\infty$. Recall that $\dim \cone (L)= 2 \nu$.

 On the other hand $l^-= \emptyset$ and from the local calculation for intersection homology with middle perversity (see \cite{goresky2}) we know that \begin{equation}\label{localcalc1} IH^i (\cone (L), \emptyset ) = IH^i (\cone (L)) = \left \{ \begin{array}{ll} H^i(L) & \text{ if } i <  \nu ,  \\ 0 & \text{ if } i \geq \nu .\end{array} \right . \end{equation}

 Since by Hodge theory for the smooth manifold $L$ we have $\HHH^i(L) \simeq H^i(L)$, the isomorphism \eqref{isomthm} follows from \eqref{kernel+} and \eqref{localcalc1}.

\item[(b)] $f= -r$. Again an explicit computation shows that \begin{equation} \label{kerminus}  \ker (\DDD _t^{(i)})  = V^i _- ,\end{equation} 
where
\begin{equation}  \label{vminus}  
V^{i} _-  : = \left \{ \begin{array}{ll} 0 & \text{ if } i \leq \nu , \\
\mathrm{span} \{ e^{-tr} r^{-n+2(i-1)} dr \wedge \eta \mid \eta \in \HHH ^{i-1} (L) \} &  \text{ if } i  > \nu .\\ \end{array}  \right .  
\end{equation}  Note again that the form $e^{-tr} r^{-n+2(i-1)} dr \wedge \eta$, $h \in H^ {i-1}(L)$, is $\LLL^2$-integrable if and only if $i > \nu$.

 On the other hand we have $l^- =  L$ and by the local calculation for the intersection homology with middle perversity
\begin{equation} \label{vminus2}  
IH^ i (\cone (L), L) =  
\left \{ \begin{array}{ll} 0 & \text{ if } i \leq \nu,  \\
H ^{i-1} (L) &  \text{ if } i  > \nu .\\
 \end{array}  \right .   \end{equation}

 Again \eqref{kerminus}  and \eqref{vminus2}   show the  isomorphism \eqref{isomthm} explicitely for this case.

Note that comparing (a) and (b) one recognises the usual  Poincar\'e duality in Morse theory, namely that the Hodge star operator $*$ yields an  isomorphism  \begin{equation}\label{poincare} * : V^i_- \simeq V^{2 \nu -i} _+. \end{equation}

\end{itemize}  }
\end{example}

\begin{example}{\rm
In \cite{ul} the case of a stratified Morse function in the sense of Goresky and MacPherson on a singular complex curve has been treated. In this case the local model for a (unibranched) singular point $p$ of multiplicity $m$ is given by $\left ( \cone (S_m^1), dr^2 + r^2 d \varphi ^2 \right )$, where $S_m^1$ denotes the circle of length $2 \pi m$. A stratified Morse function (in the coordinates $(r, \varphi)$) is just $f = r \cos \varphi$. The local model for the Witten Laplacian is then ``$\DDD_ t = \DDD + t^2$'' (but $\dom (\DDD _t) \not = \dom (\DDD)$!) and an explicit computation shows that 
\begin{equation} \dim \ker \DDD _t = \dim \ker \DDD _t ^{(1)} = m-1.  \end{equation}
The elements in $\ker  \DDD _t$ involve  the modified Bessel functions $K_{\nu}$, where  $\nu ^2\in (0,1)$ is an eigenvalue in  $\spec (\Delta _{S_m^1}) = \left \{ 0, \frac{1}{m^2}, \frac{4}{m^2} , \ldots \right \}$.
On the other hand here the lower halflink $l^-$ is homotopic to $m$ points and therefore 
\begin{equation} 
\dim IH^ i (\cone (S^1_m), l^-) =  
\left \{ \begin{array}{ll} 0 & \text{ if } i =0, 2,  \\
m-1 &  \text{ if } i  = 1 .\\
 \end{array}  \right .   \end{equation}
}

\end{example}

\section{Proof of the spectral gap theorem and Morse inequalities}\label{sectiongap}

{\it Proof of Theorem \ref{thmspectralgap}(1)}: The proof of the spectral gap theorem consists in two steps. The first step, namely the study of a model operator for the Witten Laplacian in the neighbourhood of a singular point $p \in \Sigma$ of $X$ has been done in Section 3. In the second step of the proof it is now enough to follow the strategy of proof in the smooth case. We follow here the proof in  \cite{bismutlebeau}, Section 9 and just give a very rough outline. (In \cite{ul} the proof has been detailed for the case of a complex curve with cone-like singularities). Recall from the smooth theory that the model Witten Laplacian $\DDD_{t,p}$ in the neighbourhood of a critical point $p \in X \setminus \Sigma$ of $f$ of index $i$ has discrete spectrum $\spec (\DDD _{t,p} ) = 2 \NNN t$ and $\dim \ker (\DDD _{t,p})= \dim \ker (\DDD _{t,p}^{(i)})  = 1$. We denote by $\omega _{p,1}^i(t)$ the generator of $\ker (\DDD _{t,p})$. For a singular point $p \in \Sigma $ we denote by $\{ \omega _{p,j}^i(t) \mid j = 1, \ldots,  m_p^i\}$ a bases of $\ker (\DDD _{t,p}^{(i)})$, $i = 0, \ldots , 2 \nu.$ Let $\nu _{\epsilon} : \RRR^ + \ra \RRR$ be a cut-off function with $\nu _{\epsilon }= 1$ in $[0,  \epsilon /4]$ and $\supp \nu_{\epsilon}  \subset [0, \epsilon /2]$ ($\epsilon$ as in Section \ref{sectionlocalmodel}). The forms $\Phi _{p,j}^i(t):= \nu_{\epsilon }(| x |  ) \omega ^i_{p,j}(t)$ can be identified with $\LLL^2$-forms on $X$.  For $i = 0, \ldots, 2 \nu$ we denote by 
\begin{equation*} E^i(t) := \spa \big \{   \{ \Phi _{p,1}^i(t) \mid p \in \Crit _i(f) \setminus \Sigma  \} \cup \{\Phi _{p,j}^ i(t)  \mid  p \in \Sigma ,  j \in I_p^i := \{ 1, \ldots , m_p^i \} \} \big \}. \end{equation*} We get  an orthogonal splitting $\LLL^2 (\Lambda ^*(T^*(X \setminus \Sigma))) = E(t) \oplus E(t)^{\perp}$. The closed  operator $D_t :=  d_t + \delta  _t$ with $\dom (D_t) = \dom (d _t) \cap \dom ( \delta  _t) \subset \LLL^2 (\Lambda ^*(T^*(X\setminus \Sigma))) $ can be written in matrix form

$$D_t = \left ( \begin{array}{cc} 
D_{t,1} & D_{t,2} \\
D_{t,3} & D_{t,4} \\
\end{array} \right ) \mbox { according to the splitting } E(t) \oplus E(t)^{\perp}. $$ \noindent Note that $\dom (D_t)$ equipped with the  norm  $\norm{ \omega }_1 := \sqrt{\norm{(d + \delta) \omega}^2 + \norm{\omega}^2 }$ is complete. We show  the following estimates on $D_t$ as $t \ra \infty$:

\begin{prop}\label{estimatea1}
There exist constants $c,C >0$ and $t_0 >0$ such that for all $t>t_0$ we have
\begin{enumerate}
\item For all $ \omega \in E(t) $ we have $\norm{D_t \omega} = O(e^{-ct} ) \norm{\omega} $. In particular $\norm{D_{t,1} \omega} = O(e^{-ct}) \norm{\omega}, \norm{D_{t,3} \omega} = O(e^{-ct}) \norm{\omega}.$ \item For all $\omega \in E(t)^{\perp} \cap \dom (D_t) $ we get: $\norm{D_{t,2}\omega} \leq O(e^{-ct}) \norm{\omega},  \norm{D_{t,4}\omega} \geq C(\norm{\omega}_1 + \sqrt{t}   \norm{\omega}).$\end{enumerate} \end{prop}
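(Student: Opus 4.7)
\textbf{Proof plan for Proposition \ref{estimatea1}.} The strategy follows the smooth-case blueprint of \cite{bismutlebeau}, Section 9, but uses Theorem \ref{hodgelocal} to supply the local input at singular points. First I would record three facts that drive everything. (i) For distinct points $p,p'$ in $\Crit(f|_{X\setminus \Sigma})\cup \Sigma$ the cut-offs $\nu_{\epsilon}(|x-p|)$ and $\nu_{\epsilon}(|x-p'|)$ have disjoint supports once $\epsilon$ is small, so the family $\{\Phi_{p,j}^i(t)\}$ consists of pieces living in pairwise orthogonal subspaces and is orthonormal (up to $O(e^{-ct})$) after a suitable choice of basis of each model kernel. (ii) Inside each support, the ambient metric and the model metric coincide exactly at singular points (by the conformally conic local form) and differ at smooth critical points by a $t$-independent smooth error that, multiplied against the Gaussian-type smooth model eigenforms, is exponentially small in $t$. (iii) By Theorem \ref{hodgelocal}(c) and the standard Agmon estimates for the smooth harmonic-oscillator model, every $\omega_{p,j}^i(t)$ decays like $e^{-c\sqrt{t}\,d(\cdot,p)}$ (or better) away from~$p$.

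For part (1), I would write, for $\omega=\Phi_{p,j}^i(t)$,
\begin{equation*}
D_t\Phi_{p,j}^i(t)\;=\;\nu_{\epsilon}\,D_{t,p}\,\omega_{p,j}^i(t)\;+\;\nu_{\epsilon}\bigl(D_t-D_{t,p}\bigr)\omega_{p,j}^i(t)\;+\;[D_t,\nu_{\epsilon}]\,\omega_{p,j}^i(t).
\end{equation*}
The first term vanishes by construction. The commutator $[D_t,\nu_\epsilon]$ is a zeroth-order operator supported in $\{\epsilon/4\le |x-p|\le \epsilon/2\}$, where by fact (iii) the model kernel element is of size $O(e^{-ct})$, so this contribution is $O(e^{-ct})\norm{\omega_{p,j}^i(t)}$. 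The difference $D_t-D_{t,p}$ is likewise controlled: at singular points it is zero on $\supp\nu_\epsilon$, at smooth critical points it is a bounded operator which, applied to the strongly localised Gaussian, yields $O(e^{-ct})\norm{\omega_{p,j}^i(t)}$. Summing over the approximately orthonormal basis of $E(t)$ gives $\norm{D_t\omega}=O(e^{-ct})\norm\omega$ for every $\omega\in E(t)$, from which the bounds on $D_{t,1}$ and $D_{t,3}$ follow by projecting.

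The estimate on $D_{t,2}$ is then a consequence of the self-adjointness of the Gauss--Bonnet operator associated to the Hilbert complex $(\CCC_t,d_t,\langle\,,\,\rangle)$: for $\omega\in E(t)^{\perp}\cap\dom(D_t)$ and $\phi\in E(t)$,
\begin{equation*}
\bigl|\langle D_t\omega,\phi\rangle\bigr|\;=\;\bigl|\langle\omega,D_t\phi\rangle\bigr|\;=\;\bigl|\langle\omega,(I-P_{E(t)})D_t\phi\rangle\bigr|\;\le\;\norm\omega\cdot O(e^{-ct})\norm\phi,
\end{equation*}
by part (1), yielding $\norm{D_{t,2}\omega}=O(e^{-ct})\norm\omega$.

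The heart of the argument is the coercivity bound on $D_{t,4}$. I would deploy an IMS-type localisation: pick a finite partition of unity $\{\chi_0,\chi_1,\ldots\}$ with $\sum\chi_j^2=1$, where $\chi_j$ ($j\ge 1$) is supported in a small ball around the $j$-th point of $\Crit(f|_{X\setminus\Sigma})\cup\Sigma$ and $\chi_0$ has support disjoint from all these balls, and apply
\begin{equation*}
\langle\Delta_t\omega,\omega\rangle\;=\;\sum_j \langle\Delta_t(\chi_j\omega),\chi_j\omega\rangle\;-\;\sum_j\norm{|\nabla\chi_j|\,\omega}^2.
\end{equation*}
On $\supp\chi_0$ the admissibility condition $|\nabla f|^2\ge a^2>0$ together with the formula in Lemma \ref{wittenlaplacian} gives $\langle\Delta_t(\chi_0\omega),\chi_0\omega\rangle\ge (t^2a^2-Ct)\norm{\chi_0\omega}^2$. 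At a smooth critical point the classical harmonic-oscillator analysis yields $\langle\Delta_t(\chi_j\omega),\chi_j\omega\rangle\ge 3t\norm{\chi_j\omega}^2$ once $\chi_j\omega$ is $L^2$-orthogonal to the single lowest eigenform, while at a singular point Theorem \ref{hodgelocal}(c) gives $\langle\DDD_{t,p}(\chi_j\omega),\chi_j\omega\rangle\ge ct^2\norm{\chi_j\omega}^2$ once $\chi_j\omega$ is orthogonal to $\ker\DDD_{t,p}^{(i)}$. The orthogonality hypothesis $\omega\in E(t)^{\perp}$, combined with the estimate from part (1) that shows the distance between $\chi_j\omega$ and $\ker\DDD_{t,p}^{(i)}$ is at most $O(e^{-ct})\norm\omega$, makes those local gaps usable. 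The correction $\sum\norm{|\nabla\chi_j|\omega}^2$ is bounded by $C\norm\omega^2$. Summing, $\langle\Delta_t\omega,\omega\rangle\ge C t\norm\omega^2$, i.e.\ $\norm{D_t\omega}^2\ge Ct\norm\omega^2$, which upgrades to the stated $\norm{D_{t,4}\omega}\ge C(\norm\omega_1+\sqrt t\,\norm\omega)$ after noting $\norm\omega_1^2\le 2(\norm{D_t\omega}^2+\norm\omega^2)$.

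The principal obstacle, and the reason Section \ref{sectionlocalmodel} was needed, is the IMS step at the singular points: controlling the localisation error $|\nabla\chi_j|^2$ against the honest Witten Laplacian (not the model) while simultaneously subtracting the zero eigenspace of the model. This is where the precise local spectral gap $\spec(\DDD_{t,p})\subset\{0\}\cup[ct^2,\infty)$ and the exponential decay of the model zero modes are both used, and where the admissibility condition $|\nabla f|^2\ge a^2$ near $\Sigma$ is essential to forbid leakage of low-lying spectrum into the singular region.
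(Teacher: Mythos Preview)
Your plan is correct and matches the paper's own approach, which does not give a detailed proof but simply refers to \cite{bismutlebeau}, Section 9, and singles out Theorem \ref{hodgelocal}(c) (the local spectral gap $\spec(\DDD_{t,p})\subset\{0\}\cup[ct^2,\infty)$ and the exponential decay of the model kernel) as the new input at singular points --- precisely the ingredients you isolate. One small slip: the inequality $\norm{\omega}_1^2\le 2(\norm{D_t\omega}^2+\norm{\omega}^2)$ you invoke at the end is false as written (compare \eqref{normequ}, which carries an extra $t^2\vert\nabla f\vert^2$ term). The correct route is to use Lemma \ref{wittenlaplacian} to get $\langle\Delta\omega,\omega\rangle\le\langle\Delta_t\omega,\omega\rangle+Ct\norm{\omega}^2$ (since $M_f$ is bounded and $\vert\nabla f\vert^2\ge 0$); combined with the already established bound $\norm{D_t\omega}^2\ge C't\norm{\omega}^2$ this gives $\norm{\omega}_1\le C''\norm{D_t\omega}$, and then $\norm{D_{t,4}\omega}\ge\norm{D_t\omega}-\norm{D_{t,2}\omega}$ finishes the job.
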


\noindent The proof of Proposition \ref{estimatea1} is similar to the corresponding statements in the smooth case (see \cite{bismutlebeau}, Section 9).  To prove the estimates  for forms $\omega$ with support in a neighbourhood of a singular point of $X$ the local spectral gap theorem and the decay of eigenforms of the model operator (see Theorem \ref{hodgelocal} (c)) are  crucial. As in \cite{bismutlebeau}, Section 9 (c) and (e), Proposition \ref{estimatea1} allows to give estimates for the resolvent of $D_{t} - \lambda : \dom (D_t) \ra \LLL^2 \big ( \Lambda ^*( T^*(X \setminus \Sigma )) \big )  $, where $\lambda \in \C$, $| \lambda| \in [ e^{-ct/2} , \frac{C \sqrt{t} }{2} ]$, with constants $c,C$ as in Proposition \ref{estimatea1}. We deduce the invertibility of the operator $D_t - \lambda$, and since $\Delta _t - \lambda ^2 = (D_t- \lambda) (D_t + \lambda)$ we thus get Part (1) of  Theorem \ref{thmspectralgap}. \hfill $\Box$

{\it Proof of Part (2) of Theorem \ref{thmspectralgap} and Corollary \ref{cormorse}}:

For $i = 0,\ldots, 2 \nu $ we define the $\RRR$-vector space $C^i$ by \begin{equation} C^i: = \bigoplus _{p \in \Crit _i (f)\setminus \Sigma } \RRR \cdot e_{p,1}^i \oplus \bigoplus _{p \in  \Sigma, j \in I_p^i}\RRR \cdot e^i_{p,j}. \end{equation}   Note that by construction $\dim C^i = c_i (f) $ with $c_i (f)$ as in \eqref{mpi1}. We define a linear map \begin{equation} J_i(t) :   C^ i   \longrightarrow  \CCC _{t}^i, \ J_i(t)(e_{p,j}^i) =   \Phi_{ p,j}^i (t) .\end{equation} We denote  by $(\FFF _t, d_t, \langle \ , \ \rangle  ) $ the subcomplex of $ ({\CCC _t}, d_t , \langle \ , \ \rangle )$ generated by the eigenforms of   $\Delta _t$ to  eigenvalues lying in $ [0,1]$. We denote moreover by $P (t,[0,1])$ the orthogonal projection operator from $\CCC _t$ on $\FFF _t$ with respect to $\langle \ , \ \rangle $.

 As in \cite{ulcurve}, Section 4, one can now show that the linear map $P_i ( t,[0, 1]) \circ  J_i (t) :  C ^i  \longrightarrow \FFF _{t} ^i$ is a bijective map from $C ^i$ onto $\FFF _{t} ^i$ and thus  the complex  $(\FFF _t, d_t, \langle \ , \ \rangle )$  is a finite dimensional subcomplex of $( \CCC _t, d_t , \langle \ , \ \rangle )$ with $\dim \FFF _{t} ^i = c_i(f).$  By Proposition \ref{propstokes}  moreover $H^*_{(2)} (X) \simeq \ker (\Delta _t )\simeq H^*( \FFF _t, d_t , \langle \ , \ \rangle ).$ The Morse inequalities in Corollary \ref{cormorse} now follow by a standard algebraic argument. \hfill $\Box$

\section*{Acknowledgements}\noindent I wish to thank Jean-Michel Bismut for  suggesting work on the subject. I am grateful to Jochen Br\"uning and Jean-Paul Brasselet for many helpful conversations, as well as to Olaf Post. A part of this work  was done while I was supported by SFB 647.

\bibliographystyle{amsplain}

\end{document}